\theoremstyle{thmstyleone}%
\newtheorem{theorem}{Theorem}
\newtheorem{lemma}{Lemma}
\newtheorem{corollary}{Corollary}
\newtheorem{proposition}{Proposition}
\theoremstyle{thmstyletwo}%
\newtheorem{remark}{Remark}%
\theoremstyle{thmstylethree}%
\newtheorem{definition}{Definition}%
\begin{document}

\title[Article Title]{Error bounds of Median-of-means estimators with VC-dimension}


\author[1]{\fnm{Yuxuan} \sur{Wang}}\email{wangyuxuan@zju.edu.cn}

\author[2]{\fnm{Yiming} \sur{Chen}}\email{chenyiming960212@mail.sdu.edu.cn}

\author[2]{\fnm{Hanchao} \sur{Wang}}\email{wanghanchao@sdu.edu.cn}

\author*[1,3]{\fnm{Lixin} \sur{Zhang}}\email{stazlx@zju.edu.cn}

\affil*[1]{\orgdiv{School of Mathematical Sciences}, \orgname{Zhejiang University}, \orgaddress{\city{Hangzhou}, \postcode{310027}, \country{China}}}

\affil[2]{\orgdiv{Institute for Financial Studies}, \orgname{Shandong University}, \orgaddress{\city{Jinan}, \postcode{250100}, \country{China}}}

\affil[3]{\orgdiv{School of Statistics and Mathematics}, \orgname{Zhejiang Gongshang University}, \orgaddress{ \city{Hangzhou}, \postcode{310018}, \country{China}}}


\abstract{We obtain the upper error bounds of robust estimators for mean vector, using the median-of-means (MOM) method. The method is designed to handle data with heavy tails and contamination, with only a finite second moment, which is weaker than many others, relying on the VC dimension rather than the Rademacher complexity to measure statistical complexity. This allows us to implement MOM in covariance estimation, without imposing conditions such as \(L\)-sub-Gaussian or \(L_{4}-L_{2}\) norm equivalence. In particular, we derive a new robust estimator, the MOM version of the halfspace depth, along with error bounds for mean estimation in any norm.}

\keywords{VC-dimension, robustness, median-of-means, heavy tails}



\maketitle

\section{Introduction}

Inspired by applications in machine learning and data science, there has been a growing interest in constructing $\mu$ mean estimators in recent years. As the most basic method of estimation, the sample mean $\bar{\mu}_N=\frac{1}{N} \sum_{i=1}^n X_i$ on a sample $\left(X_1, \ldots, X_N\right)$ of $N$ independent and identically distributed random variables possesses favorable statistical properties established by the central limit theorem. However, the asymptotic properties often require a large sample size in practical applications, significantly increasing the difficulty level. Alternatively, non-asymptotic estimators with faster convergence rates have emerged, implying the need for fewer samples.

Simultaneously, in situations where the distribution exhibits heavy-tailed characteristics or outliers present in the data, the empirical mean may no longer be sufficient to meet the requirements. There is an urgent need to enhance the quality of mean estimation, particularly in a non-asymptotic context. It is worth mentioning that in mean estimation, statistical optimality, including convergence rates and computational complexity, are crucial factors. We will primarily focus on estimators that maintain high accuracy while providing substantial confidence in mean estimation.
Unlike asymptotic estimators, we consider a non-asymptotic estimator known as the "$L$-sub-Gaussian" estimator as below. For a mean estimator $\widehat{\mu}_N$ and any $\delta\in(0,1)$, let $\sigma^2$ be the variance, then there exists a constant \(L > 0\), for any sufficiently large sample size \(N\), the following inequality holds with at least \(1 - \delta\) probability:
$$
\left|\widehat{\mu}_N-\mu\right| \leq \frac{L \sigma \sqrt{\log (2 / \delta)}}{\sqrt{N}}.
$$
This definition comes from the accuracy of empirical mean estimation in the context of sub-Gaussian distributions. Many well-known robust mean estimators exhibit this property, with the most common being the median-of-means (MOM) estimator. MOM method has experienced rapid development, as evidenced by works such as \cite{LO,devroye2016sub,LM2}.

The estimation of vector means and real-valued sample means are fundamentally different, as illustrated in \cite{LM1}. The former involves transforming into a problem of concentration inequalities for the upper bounds of a stochastic process indexed by vectors in $\mathbb{R}^d$. This represents an essential distinction and also marks a departure from previous work.

For the estimation of the mean in the multivariate sub-Gaussian case, the empirical mean of independent and identically distributed samples with mean \( \mu \) and covariance matrix \( \Sigma \) satisfies, with at least \( 1-\delta \) probability:

\begin{equation}\label{sg}
\left\|\dfrac{\sum_{i=1}^{N}X_i}{N}-\mu\right\| \leq\sqrt{\dfrac{\text{Tr}(\Sigma)}{N}}+ \sqrt{\dfrac{\left\| \Sigma\right\| \log(1/\delta)}{N}},
\end{equation}
where $\operatorname{Tr}(\Sigma)$ is the trace of $\Sigma$ and $\|\Sigma\|$ is the operator norm of $\Sigma$. A mean estimator is considered sub-Gaussian, as defined in \cite{LM}, if it satisfies an inequality of the form above (with possibly different constant factors). Lugosi and Mendelson \cite{lugosi2019near} used the MOM estimator and obtained near-optimal confidence bounds for mean estimation under general heavy-tailed conditions, specifically when only the second moment is finite. This means that for \( \delta \in (0,1) \), the optimal confidence upper bound holds with at least a probability of \( 1-\delta \):$$\left\|\widehat{\mu}_N-\mu\right\| \leq \frac{c}{\sqrt{N}}\left(\max \left\{\mathbb{E}\left\|Y_N\right\|, \mathbb{E}\|G\|+R \sqrt{\log (2 / \delta)}\right\}\right),
$$
where $c$ is an absolute constant and
$$
R=\sup _{x^* \in \mathcal{B}^{\circ}}\left(\mathbb{E}\left(x^*(X-\mu)\right)^2\right)^{1 / 2},
$$
$$
Y_N=\frac{1}{\sqrt{N}} \sum_{i=1}^N \varepsilon_i\left(X_i-\mu\right),
$$
where $\mathcal{B}^{\circ}$ is the unit ball of the dual space to $\left(\mathbb{R}^d,\|\|\right)$, $\left(\varepsilon_i\right)_{i=1}^N$ are i.i.d. symmetric $\{-1,1\}$-valued random variables that are also independent of $\left(X_i\right)_{i=1}^N$, and $\mathbb{E}\left\|Y_N\right\|/\sqrt{N}$ is called the Rademacher complexity. Also, observe that by the central limit theorem, $Y_N$ tends, in distribution, to the centered Gaussian random vector $G$ that has the same covariance as $X$.

Mendelson and Zhivotovskiy \cite{10.1214/19-AOS1862} furthered this line of research by constructing estimators for the covariance matrix of random vectors that are robust to heavy tails and outliers. However, their approach imposes stricter conditions, such as the \( L_{4}-L_{2} \) norm equivalence.
Some other covariance estimators adopt a two-step approach \cite{ke2019user}: firstly, estimate \( \mu \) using MOM or other methods, and then employ truncation techniques to estimate the covariance, with the aim of mitigate the influence of heavy tails. This two-step method is often used to improve the robustness and precision of covariance matrix estimation in the presence of heavy-tailed distributions or outliers.

\begin{remark}[Catoni estimator]
    As one of the first studies on sub-Gaussian mean estimators, Catoni \cite{catoni2012challenging} introduced a sharp example for distributions with known variances and distributions with finite fourth moments and known upper bounds of kurtosis. Using a specific function \( \phi \), the M-estimator, also known as the Catoni estimator, provides a well-performed confidence interval, naturally extended to mean estimation of heavy-tailed random vectors \cite{catoni2017dimension} and covariance matrix estimation \cite{wei2017estimation,minsker2018sub}. However, the construction of Catoni estimator inevitably requires the distribution variance \( \sigma^2 \) or \( \Sigma \) to be known.  
\end{remark} 
In computer science and logic design, Boolean functions are basic, representing functions with binary output. In machine learning, they are used to model simple classification problems. The VC dimension, first introduced by Vapnik and Chervonenkis \cite{vapnik2013nature}, measures the maximum complexity that a model's hypothesis space can handle, especially in classification tasks. While it is primarily used for assessing classifier complexity, directly computing the VC dimension can be challenging, especially in high-dimensional and complex models. It serves as a theoretical guide for understanding the learning and generalization of a model. 

Depersin \cite{depersin2020robust} introduced a novel general approach to constrain the estimation error of MOM estimators. The author applied VC dimension instead of Rademacher complexity to measure statistical complexity, which does not take into account
the unknown structure of the covariance matrix, but is related only to the dimension
of the dual space. 

In the context of multivariate analysis, the varying definitions of the median give rise to distinct MOM estimators. Among those, geometric median is proved, by Minsker \cite{minsker2015geometric}, to be available in constructing robust MOM estimators.
Depersin and Lecu{\'e} \cite{depersin2023robustness} then discussed the construction of sub-Gaussian estimators of a mean vector by VC dimension. 

Furthermore, ever since Tukey \cite{tukey1975mathematics} introduced the concept of data depth (also known as halfspace depth), it has emerged as a fundamental tool to assess the centrality of data points in multivariate datasets. Consequently, in this paper, we endeavor to delve into the error bounds achievable by a novel estimator: the MOM adaptation of Tukey's median. We anticipate exploring the bounds within certain confidence levels, leveraging the VC dimension as a guiding framework. Through this exploration, our objective is to shed light on the robustness and efficacy of MOM estimators in practical data analysis scenarios.

\paragraph{Motivation.}
In many contemporary data science and machine learning pipelines one is confronted with high--dimensional observations, \emph{heavy--tailed} distributions, possibly \emph{contamination} and only a \emph{moderate} number of samples.
A single anomalous data point may distort the empirical mean or covariance by orders of magnitude, yet these basic quantities constitute indispensable building blocks for downstream procedures such as principal component analysis, classification and linear regression.
We therefore require estimators that
\begin{enumerate}
	\item match the statistical efficiency of the empirical mean in well--behaved (sub--Gaussian) regimes,
	\item remain reliable under heavy--tailed distributions or an unknown fraction of outliers, 
	\item come with explicit, finite sample (non--asymptotic) deviation guarantees.
\end{enumerate}
These considerations motivate our investigation of MOM based procedures and the development of a MOM adaptation of Tukey's halfspace median that attains sub--Gaussian deviation bounds without imposing restrictive moment or tail assumptions.

\bigskip
\paragraph{Our contributions.}
Building on the recent advances of Lugosi, Mendelson, Depersin, and others, our goal is to develop a unified MOM framework that (a) requires only finite second moments, (b) is robust to contamination, and (c) extends seamlessly from mean to covariance estimation and further to PCA.
The main contributions of this paper are:
\begin{itemize}
	\item We construct a MOM estimator of the mean vector that achieves nearly sub--Gaussian deviation bounds under merely second--moment assumptions.
	\item We show that the same methodology provides a theoretical foundation for robust estimation of the covariance matrix without relying on $L$--sub--Gaussian or $L_{4}$--$L_{2}$ norm--equivalence conditions.
	\item We introduce a MOM version of Tukey's median and establish its non--asymptotic error bounds via VC--dimension arguments, suggesting its potential usefulness for robust PCA.
\end{itemize}

The structure of this paper is as follows. In Section \ref{2}, we will provide the necessary symbol explanations and introduce the definitions and lemmas; in Section \ref{3} and Section \ref{4}, we present the error bounds of the mean estimation introduced by \cite{lugosi2019near} and covariance estimation, respectively; finally, in Section \ref{5}, we will give a Tukey MOM estimator.

\section{Preliminary}\label{2}
\subsection{Notation}
In this paper, we assume that the covariance matrix of interest is non-degenerate. We use $||\cdot||$ to represent a norm on $\mathbb{R}^d$, and assume the existence of an inner product $\left\langle \cdot, \cdot \right\rangle$ that induces this norm. $||\cdot||_*$ denotes its dual norm. $\mathcal{B}$ represents the unit ball of the norm $||\cdot||$, and $\mathcal{B}^*$ represents the unit ball of the norm $||\cdot||_*$. $\mathcal{B}_0^*$ is defined as the set of extreme vectors of $\mathcal{B}^*$. We also introduce the operator norm $||A||=\sup_{u \in \mathcal{B}^*} ||Au||_2$, where $||\cdot||_2$ is the Euclidean norm on $\mathbb{R}^d$. In particular, for a vector $u = (u_i)$, $||u||_2 = \sqrt{\sum_i u_i^2}$ represents the $\ell^2$ norm. The set $S^{d-1} = \{u \in \mathbb{R}^{d} : ||u|| = 1\}$ is the unit sphere in $\mathbb{R}^{d}$. For a matrix $A = (A_{ij})$, when $A = A^T \in \mathbb{R}^{p\times p}$ is symmetric, we use $\lambda_j(A)$ to denote its $j$th largest singular value. The operator norm of $A$ is represented as $||A||_{op} = \lambda_{1}(A)$, and the Frobenius norm is denoted as $||A||_{F} = \sqrt{\sum_{ij} A^2_{ij}}$. 

Given an integer $d$ and $ a, b \in \mathbb{R} $, we use $[d]$ to denote the set $\{1, 2, \dots, d\}$ and write $a \vee b = \max(a, b)$ and $ a\wedge b = \min(a, b) $. For two non-negative sequences $ \{a_{n}\}, \{b_{n}\} $, for some constant $ C > 0 $ independent of $ n $, $ a_n\lesssim b_n $ means $ a_n\leq C b_n $, and $ a_n\gtrsim b_n $ means $  a_n\geq C b_n $.  Throughout the paper, $ C, c $ and their variations, whose specific values may vary, represent universal constants independent of $ n $. Additionally, the indicator function $ \mathbf{1}_B(\cdot) $ is defined as$$
\mathbf{1}_B(u)= \begin{cases}1, & \text { if } u\in B, \\ 0, & \text { if } u \notin B .\end{cases}
$$

\subsection{VC dimension}\label{vc}
Boolean classes $ \mathcal{F} $ arise in the problem of classification, where $ \mathcal{F} $ can be taken to consist of all functions
$ f $ of the form $ \mathbf{1}_{\{g(X)\ne Y\} }$ for mappings $g$. VC dimension was first studied by Vapnik and Cervonenkis
in the 1970s, and let us recall the classical definitions.
\begin{definition}[Boolean function]
	Every $ f \in \mathcal{F} $, taking values in $ \{0, 1\} $, is called a boolean function. And $ \mathcal{F} $ is called a boolean class of functions.
\end{definition}

\begin{definition}[VC dimension]
	Let $  \mathcal{C} $ be a class of subsets of any set $ \mathcal{X} $. We say that $  \mathcal{C} $ picks out a certain subset from $ \{x_1, . . . , x_n\} $ if this can be formed as a set of the form $  C \cap  \{x_1, . . . , x_n\} $ for some $ C\in  \mathcal{C} $. The collection $ \mathcal{C} $ is said to shatter $ \{x_1, . . . , x_n\} $ if each of its $ 2^n $ subsets can be picked out by $  \mathcal{C} $. The VC dimension  $\operatorname{VC}(\mathcal{C})$ is the largest cardinality of a set shattered by $  \mathcal{C} $, more formally, \[\operatorname{VC} (\mathcal{C}) = \sup\left\lbrace n : \max_{x_1 ,...,x_n\in\mathcal{X}}\#\{C\cap \{x_1, . . . , x_n\}:C\in \mathcal{C}\}= 2^n\right\rbrace ,\] and in particular, $\operatorname{VC}(\mathcal{C})=-1$ if $\mathcal{C}$ is empty.
\end{definition}
The definition of VC dimension can be easily extended to a function class $ \mathcal{F} $ in which every
function $ f $ is binary-valued, taking the values within $ \{0, 1\} $.  In this case, we define \[\operatorname{VC} (\mathcal{F}) = \sup\left\lbrace n : \max_{x_1, . . . , x_n\in\mathcal{X}}\#\{\left( f(x_1) ,...,f(x_n)\right) :f\in \mathcal{F}\}= 2^n\right\rbrace.\]
In particular, we derive the equivalent definition of the VC dimension as for a set $ C $ be a subset of Euclidean space $ E $, the VC dimension of the set of half-spaces generated by the vectors of $ C $,\[\operatorname{VC}(C)=\operatorname{VC}\left( \left\lbrace x\in E\rightarrow\mathbf{1}_{\left\langle x,v\right\rangle \geq 0}: v\in C\right\rbrace\right) . \]

\subsection{Contamination model}
As in practice, sometimes we cannot directly observe the vectors $Y_1, \ldots, Y_N$. Instead, this dataset may already be contaminated or corrupted. One of the most famous examples is the so-called Huber's contamination model. In this setting, instead of observing samples directly from the true distribution $P$, we observe samples drawn from $P_\varepsilon$, which for an arbitrary distribution $ Q $ is deﬁned as a mixture model,\[P_\varepsilon = (1-\varepsilon)P +\varepsilon Q.\]   This setting is called the $ \varepsilon $-contamination model, first proposed in a groundbreaking paper by Huber \cite{huber1964robust}.

More generally, our problem is that the contamination may be adversarial \cite{dalalyan2022all}. This means that when an $\varepsilon$ fraction of all observed values is maliciously tampered with by an adversary, who is aware of both the "clean" samples and our estimators, there exists a (possibly random) set $\mathcal{O}$ such that for any $i \in \mathcal{O}^c, X_i=Y_i$. Here, the size of $\mathcal{O}$ satisfies $|\mathcal{O}| \leq \lfloor \varepsilon N \rfloor$. Thus, the dataset we observe is $\left\{X_i: i=1, \ldots, N\right\}$, and this model is commonly referred to as a strong contamination model. The contaminated samples $\left\{X_i: i=1, \ldots, N\right\}$ will be called $\varepsilon$-contaminated samples. Furthermore, our task is to recover $\mu$ and $\Sigma$.

\subsection{Median of mean}
Recall the definition of the classic median-of-means (MOM). First, we randomly divide the data into $K$ equally sized blocks $B_1, \ldots, B_K$ (if $K$ does not divide $N$ evenly, we discard some data). Then we calculate the empirical mean within each block. For $k=1, \ldots, K$,
$$
\bar{X}_k=\frac{1}{m} \sum_{i \in B_k} X_i.
$$

	\begin{definition}[Univariate median]
		In the one-dimensional case, for $x_{1},\dots,x_{n} \in \mathbb{R}$, $\text{Med}(x_{k})=x_{i}$, such that
		\[ \#\left\{j \in[n]: x_j \leq x_i\right\} \geq \frac{n}{2} \text { and }\#\left\{j \in[n]: x_j \geq x_i\right\} \geq \frac{n}{2},\]
		where $ \#(\cdot) $ denotes the cardinality of the set, and if there are multiple $i$ satisfying this condition, the median is defined as the smallest among them.
	\end{definition}

Let the MOM estimator be $\tilde \mu_{0}\coloneqq \text{Med}(\bar{X}_k)$, then it can be shown that, under suitable second-moment conditions, $\tilde \mu_{0}$ is a sub-Gaussian estimator.

\section{Mean estimation }\label{3}

Now, let $Y_1, \ldots, Y_N$ denote $N$ independent and identically distributed random vectors in $\mathbb{R}^d$. Our goal is to estimate $\mathbb{E}Y_1 = \mu\in \mathcal{U}$, where $\mathcal{U}$ is a subset of $\mathbb{R}^d$, assuming that $Y_1$ has finite second moments. Define $\Sigma = \mathbb{E}\left(\left(Y_1-\mu\right)\left(Y_1-\mu\right)^T\right)$, sometimes also denoted as $\mathbb{E}\left(\left(Y_1-\mu\right) \otimes \left(Y_1-\mu\right)\right)$, to represent the unknown covariance matrix of $Y_1$. The $\varepsilon$-contaminated samples $\left\{X_i: i=1, \ldots, N\right\}$ is observed.

For the mean estimation of a multi-dimensional random vector, we have the following class of median-of-means (MOM) estimators:
\begin{definition}[MOM proposed by \cite{lugosi2019near}]
For $\epsilon>0$, the sample $\left\{X_i\right\}_{i=1}^N$ can be divided into $K$ blocks $B_k$, each of size $m=N / K$. Let $\bar{X}_k=\frac{1}{m} \sum_{i \in B_k} X_i$.
For each $x^* \in \mathcal{B}^*(\mathcal{U})$, we obtain the set
\begin{equation}\label{s}
S_{x^*}=\left\{y \in \mathcal{U}: \left| \text{Med}\left(x^*\left(\bar{X}_k\right) : k\in [K]\right)-x^*(y) \right| \leq \epsilon\right\} .   
\end{equation}
Let $\mathbb{S}(\epsilon)=\bigcap_{x^* \in \mathcal{B}_0^*(\mathcal{U})} S_{x^*}$, and $\widehat{\mu}_K(\epsilon, \delta)$ be taken as any point in $\mathbb{S}(\epsilon)$.
\end{definition}
 Intuitively, \( \mathbb{S}(\epsilon) \) represents the set of points that are ‘consistent’ with the majority of block-wise medians. For each dual direction $x^*\in \mathcal{B}^*_0(\mathcal{U})$, the constraint
 \[
 \big|\,\mathrm{Med}(x^*(\bar X_k)) - x^*(y)\,\big|\le\epsilon
 \]
 defines a closed slab around the hyperplane $x^*(y)=\mathrm{Med}(x^*(\bar X_k))$. 
 The set $\mathbb{S}(\epsilon)$ is the intersection of all such slabs, and hence forms a closed 
 (and convex, when $\mathcal{U}$ is convex) region in $\mathcal{U}$. This construction ensures that any 
 candidate $y\in \mathbb{S}(\epsilon)$ is simultaneously consistent with the robust 
 direction-wise median constraints, and under the event
 \[
 \sup_{x^*\in \mathcal{B}^*_0(\mathcal{U})}\big|\mathrm{Med}(x^*(\bar X_k))-x^*(\mu)\big|\le \epsilon,
 \]
 the true mean $\mu$ is guaranteed to belong to $\mathbb{S}(\epsilon)$ and satisfies
 $\|y-\mu\|\le 2\epsilon$ for all $y\in \mathbb{S}(\epsilon)$. Thus, $\mathbb{S}(\epsilon)$ 
 can be interpreted as a data-driven confidence region whose geometry is determined 
 by robust directional constraints and whose diameter is controlled by the tolerance $\epsilon$.
 
 \begin{remark}
 	Note that the construction of our estimator $ \widehat{\mu}_{K} $ depends on the number of splitting blocks $ K $, which is relative to the level $ \delta $.   In addition, the drawback is that this estimator is more theoretical than practical, since it is not convenient to construct a set such as $\mathbb{S}(\epsilon)$. As shown in \cite{lugosi2019near}, for every $\epsilon>0$, the sets $\mathbb{S}(\epsilon)$ are compact, nested, and nonempty for a sufficiently large $\epsilon$. Therefore, the set
 	$$
 	\mathbb{S}=\bigcap_{\epsilon>0: \mathbb{S}(\epsilon)\neq \emptyset} \mathbb{S}(\epsilon)
 	$$
 	is not empty. We can define the mean estimator as any element in $\mathbb{S}$.
 \end{remark}
It can be shown that the proposed estimator satisfies the following.
\begin{theorem}\label{m}
	For any $\delta \in\left[e^{-c N}, 1 / 2\right]$, there exists an estimator $\widehat{\mu}_\delta$ such that, with probability at least $ 1-\delta, $	$$
	\left\|\widehat{\mu}_\delta-\mu\right\| \lesssim R\left(\sqrt{\frac{\mathrm{VC}\left(\mathcal{B}_0^*(\mathcal{U})\right)}{N}}+\sqrt{\frac{\log (1 / \delta)}{N}}+\sqrt{\varepsilon}\right) ,
	$$
	where $ R^{2}=\sup _{v \in \mathcal{B}_0^*(\mathcal{U})} \mathbb{E}\left(\left\langle Y_1-\mu, v\right\rangle^2\right). $
\end{theorem}

We will show that, in fact, we obtain the concentration inequality on the MOM estimator \[\mathbb{P}\left(\left\|\mu-\widehat{\mu}\right\| \geq 8R\sqrt{\frac{K}{N}}\right) \leq \exp{(-K/128)},\]whenever $ K\geq C(\mathrm{VC}(\mathcal{B}_0^*(\mathcal{U}))\vee|\mathcal{O}|), $ where $C$ is a universal constant. Hence, by taking $K\geq 128\log (1/\delta)$, we get the upper bound of the error.

Since the result is true for any norm equipped with inner product, if we consider the Euclidean norm, it shows that with probability no more than $ 1-\delta, $ there exists an estimator such that
$$
\left\|\mu-\widehat{\mu}_\delta\right\|_{2} \lesssim\left\|\Sigma^{1 / 2}\right\|\left(\sqrt{\frac{\mathrm{VC}\left(\mathcal{B}_0^*\right)}{N}}+\sqrt{\frac{\log (1 / \delta)}{N}}+\sqrt{\varepsilon}\right) .
$$
\begin{remark} We refer to the first two terms on the right-hand side of the above inequality as the weak and strong terms, respectively: the strong term  is a global component, and the weak term with directional information, corresponds to the largest variance of a one dimensional marginal of $Y_1$, that is,  $\sup _{v \in {B}_0^*} \sigma(v):=\sup _{v \in \mathcal{B}_0^*} \sqrt{\mathbb{E}\left(\left\langle Y_1-\mu, v\right\rangle^2\right)}$ Strong-weak inequalities are an important notion in high-dimensional probability, and for more improvements, one can see \cite{lugosi2020multivariate}, which constructs an estimator that, up to the optimal strong term, performs robustly in every direction. As for the third term, the corruption error guarantee, is known to be information-theoretically
optimal, even in the infinite sample regime, according to \cite{cheng2019high}.
\end{remark}
Consider the most common scenario, where $\mathcal{U}=\mathbb{R}^d$, hence ${B}_0^*=S^{d-1}$, we can derive an upper bound with the first term of the order of $ \sqrt{\lambda_1(\Sigma)d}/\sqrt{N} $. Compared with the error of the sub-Gaussian empirical mean in (\ref{sg}), where the corresponding term is $ \sqrt{\operatorname{Tr}(\Sigma)}/\sqrt{N} $, it matches when $\Sigma \simeq \lambda \mathrm{I_d}$. 

To illustrate the utility of the VC dimension, we now provide an example in the sparse setting. Let $ \mathcal{U}_{s}=\left\lbrace y\in \mathbb{R}^{d}: \left\| y\right\|_{0}\leq s \right\rbrace  $, for $ s<d $. Take $ \mathcal{U}=\mathcal{U}_{s} $ and $ \mathcal{B}_0^*\left( \mathcal{U}\right) =S^{d-1}\cap \mathcal{U}_{2s} $ in the proof of Theorem \ref{m}. Since $ \mathrm{VC}\left(\mathcal{B}_0^*\left( \mathcal{U}\right)\right)\leq \mathrm{VC}\left( \mathcal{U}_{2s} \right)$, which can be considered as the VC dimension of the union of $ \binom{d}{2s} $ s-dimension subspaces of $ \mathbb{R}^{d} $, by Lemma \ref{v}, we have $ \mathrm{VC}\left( \mathcal{U}_{2s} \right)\leq cs\log(sd) $. As a result, we obtain the following corollary, which matches the optimal rate in a sparse setting.

\begin{corollary}
	Suppose $ \mu\in \mathbb{R}^{d},\left\| \mu\right\|_{0}\leq s  $, and $ \left\|\Sigma\right\|<\infty $, then for any $\delta \in\left[e^{-c N}, 1 / 2\right]$, there exists an estimator $\widehat{\mu}_\delta$ such that, with probability at least $ 1-\delta, $	$$
	\left\|\widehat{\mu}_\delta-\mu\right\|_{2} \lesssim \left\|\Sigma^{1 / 2}\right\|\left(\sqrt{\frac{s\log (sd)}{N}}+\sqrt{\frac{\log (1 / \delta)}{N}}+\sqrt{\varepsilon}\right) .
	$$
\end{corollary}

\section{Covariance estimation}\label{4}

Let $0<\delta<1$ and consider the given sample $X_1, \ldots, X_N$. Again, the sample $\left(X_i\right)_{i=1}^N$ can be partitioned into $K$ blocks $B_k$, each of size $m=N / K$.
Set $M_k=\frac{1}{m} \sum_{i \in B_k} {X}_i \otimes {X}_i$. Recall the well-known fact that the dual norm to the operator norm is the nuclear norm. And, since a linear functional $z$ acts on the matrix $x$ via trace duality, that is, $z(x)=[z, x]:=\operatorname{Tr}\left(z^T x\right)$. It follows that
$T=\left\{u\otimes u\mid u \in \mathcal{B}_0^*\left(\mathbb{R}^d\right)\right\}$ is the set of extreme points of the corresponding dual unit ball $B^{\circ}$. For $\epsilon>0$ and a fixed $u\in\mathcal{B}_0^*$, let $ U=u\otimes u, $ and 
$$
S_{u}(\epsilon)=\left\{Y \in \mathbb{R}^{d \times d}:\left|\left[ M_k-Y, U\right] \right| \leq \varepsilon \text { for more than } K / 2 \text { blocks }\right\} .
$$
Set
$$
S(\epsilon)=\bigcap_{U \in T} S_u(\epsilon) .
$$
The estimator $\widehat{\Sigma}_{\delta}$ is taken as any points in $ S(\epsilon) $. Again, we derive the bound of robust covariance estimator.
\begin{theorem}\label{c}
	For any $\delta \in\left[e^{-c N}, 1 / 2\right]$, there exists an estimator $\widehat{\Sigma}_\delta\in S(\epsilon)$ such that	$$
	\left\|\Sigma-\widehat{\Sigma}_\delta\right\| \lesssim \sigma\left(\sqrt{\frac{\mathrm{VC}\left(\mathcal{B}_0^*\right)}{N}}+\sqrt{\frac{\log (1 / \delta)}{N}}+\sqrt{\varepsilon}\right) ,
	$$
	where $ \sigma^2=\sup _{u \in \mathcal{B}_2} \mathbb{E}\left(\left\langle u,\left(\Sigma-Y_1 Y_1^T\right) u\right\rangle^2\right)<\infty $.
\end{theorem}
\begin{remark}
	Compared to the similar estimator in \cite{10.1214/19-AOS1862}, our estimator does not require a two-step estimation for the trace and truncation level. In addition, it imposes fewer assumptions. For example, \(L\)-sub-Gaussian or \(L_{4}-L_{2}\) norm equivalence for the sample distribution is no longer necessary.
\end{remark}

Similarly, we can immediately obtain expressions regarding the Frobenius norm as follows:
\begin{corollary}
	For any $\delta \in\left[e^{-c N}, 1 / 2\right]$, there exists an estimator $\widehat{\Sigma}_\delta$ such that	$$
	\left\|\Sigma-\widehat{\Sigma}_\delta\right\|_{F} \lesssim \sigma\left(\sqrt{\frac{\mathrm{VC}\left(\mathcal{B}_0^*\right)}{N}}+\sqrt{\frac{\log (1 / \delta)}{N}}+\sqrt{\varepsilon}\right) ,
	$$
	where $ \sigma $ is the same as in Theorem \ref{c}.
\end{corollary}
Another immediate corollary of Theorem \ref{c} is the quantitative result for the performance of PCA based on the estimator $\widehat{\Sigma}_\delta$. Let $\operatorname{Proj}_k$ be the orthogonal projector on a subspace corresponding to the $k$ largest positive eigenvalues $\lambda_1, \ldots, \lambda_k$ of $\Sigma$ (here, we assume for simplicity that all the eigenvalues are distinct) and $\widehat{\operatorname{Proj}_k}$ be the orthogonal projector of the same rank as $\operatorname{Proj}_k$ corresponding to the $k$ largest eigenvalues of $\widehat{\Sigma}_\delta$. The following bound follows from the Davis-Kahan perturbation theorem in \cite{DBLP:conf/nips/ZwaldB05}.

\begin{corollary}
	Let $\Delta_k=\lambda_k-\lambda_{k+1}$, and assume that $\Delta_k \geq 16 \sigma \sqrt{\frac{K}{N}}$. Then
	$$
	\left\|\widehat{\operatorname{Proj}_k}-\operatorname{Proj}_k\right\| \leq \frac{8}{\Delta_k} \sigma \sqrt{\frac{K}{N}},
	$$
	with probability at least $\left(1-\exp (-K / 128)\right)$.
\end{corollary}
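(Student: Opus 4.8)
The plan is to treat $\widehat{\Sigma}_\delta$ as a perturbation of $\Sigma$ and to feed the operator-norm bound of Theorem \ref{c} into a spectral-projector perturbation inequality. First I would condition on the high-probability event produced in the proof of Theorem \ref{c}: whenever $K \geq C(d \vee |\mathcal{O}|)$, with probability at least $1 - \exp(-K/128)$ one has
$$
\|\widehat{\Sigma}_\delta - \Sigma\| \leq 8\sigma\sqrt{\frac{K}{N}}.
$$
Everything that follows is deterministic and takes place on this event, so the confidence level $1 - \exp(-K/128)$ in the statement is inherited verbatim.

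Next I would set $D = \widehat{\Sigma}_\delta - \Sigma$ and check the separation condition demanded by the Davis--Kahan-type bound. The hypothesis $\Delta_k \geq 16\sigma\sqrt{K/N}$ gives exactly
$$
\|D\| \leq 8\sigma\sqrt{\frac{K}{N}} \leq \frac{\Delta_k}{2},
$$
so the perturbation is at most half the spectral gap $\Delta_k = \lambda_k - \lambda_{k+1}$ isolating the top-$k$ block of eigenvalues of $\Sigma$. Since the eigenvalues are assumed distinct, $\operatorname{Proj}_k$ and $\widehat{\operatorname{Proj}_k}$ are well-defined orthogonal projectors of the same rank $k$, and $\|\widehat{\operatorname{Proj}_k} - \operatorname{Proj}_k\|$ coincides with the operator norm of $\sin\Theta$ for the principal angles $\Theta$ between the two top-$k$ eigenspaces.

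I would then invoke the spectral-projector perturbation theorem of \cite{DBLP:conf/nips/ZwaldB05}, a Davis--Kahan $\sin\Theta$ statement which, under the gap condition $\|D\| \leq \Delta_k/2$, yields
$$
\|\widehat{\operatorname{Proj}_k} - \operatorname{Proj}_k\| \leq \frac{\|D\|}{\Delta_k}.
$$
Substituting the operator-norm bound on $D$ gives $\|\widehat{\operatorname{Proj}_k} - \operatorname{Proj}_k\| \leq \tfrac{8}{\Delta_k}\sigma\sqrt{K/N}$, which is precisely the claim.

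The main obstacle is bookkeeping of constants rather than any new idea: one must cite the precise form of the perturbation theorem whose conclusion is $\|D\|/\Delta_k$ and whose applicability threshold is $\|D\| \leq \Delta_k/2$, since these are exactly what reproduce the factor $8$ and the assumption $\Delta_k \geq 16\sigma\sqrt{K/N}$. Some care is also needed because Theorem \ref{c} is stated for the operator norm $\|\cdot\|$ attached to the chosen dual ball $\mathcal{B}_0^*$, so one should confirm this norm agrees with (or dominates) the spectral norm in which the Davis--Kahan bound is phrased; in the Euclidean, self-adjoint setting the two coincide and the argument goes through without change.
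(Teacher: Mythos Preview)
Your proposal is correct and matches the paper's approach exactly: the paper does not give a detailed proof of this corollary but simply states that it ``follows from the Davis--Kahan perturbation theorem in \cite{DBLP:conf/nips/ZwaldB05}'', combined with the operator-norm bound $\|\widehat{\Sigma}_\delta-\Sigma\|\leq 8\sigma\sqrt{K/N}$ established in Theorem~\ref{c}. Your sketch is precisely the intended unpacking of that sentence, including the role of the hypothesis $\Delta_k \geq 16\sigma\sqrt{K/N}$ as the applicability threshold $\|D\|\leq \Delta_k/2$ for the Zwald--Blanchard bound.
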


\section{Tukey's median}\label{5}
We consider a special type of median of mean, where we take Tukey's median (see \cite{aloupis2006geometric}), as a robust estimator. First, we need to introduce halfspace depth function. For any $\eta \in \mathbb{R}^d$ and a distribution $\mathbb{P}$ on $\mathbb{R}^d$, the halfspace depth of $\eta$ with respect to $\mathbb{P}$ is defined as
$$
\mathcal{D}(\eta, \mathbb{P})=\inf _{u \in S^{d-1}} \mathbb{P}\left\{u^T X \leq u^T \eta\right\} \quad \text { where } X \sim \mathbb{P} .
$$

Given i.i.d. observations $\left\{X_i\right\}_{i=1}^N$, the halfspace depth of $\eta$ with respect to observations $\left\{X_i\right\}_{i=1}^N$ is defined as
$$
\mathcal{D}\left(\eta,\left\{X_i\right\}_{i=1}^N\right)=\mathcal{D}\left(\eta, \mathbb{P}_N\right)=\min _{u \in S^{p-1}} \frac{1}{N}\sum_{i=1}^N \mathbf{1}_{\left\{u^T X_i \leq u^T \eta\right\}},
$$
where $\mathbb{P}_N=\frac{1}{N} \sum_{i=1}^N \delta_{X_i}$ is the empirical distribution. Then Tukey's median is defined as the deepest point with respect to the observations, that is,
$$
\hat{\theta}=\arg \max _{\eta \in \mathbb{R}^d} \mathcal{D}\left(\eta,\left\{X_i\right\}_{i=1}^N\right) .
$$
\subsection{Halfspace depth}
Define half space $H_{u, \eta}=\left\{y: u^T y \leq u^T \eta\right\}$. Recall that the Tukey's depth of $\eta$ with respect to $\mathbb{P}$ and its empirical counterpart are

$$
\begin{aligned}
\mathcal{D}\left(\eta, \mathbb{P}\right) & =\inf _{u \in S^{p-1}} \mathbb{P}\left(H_{u, \eta}\right)=\inf _{u \in S^{p-1}} \mathbb{P}\left\{u^T Y \leq u^T \eta\right\}, \\
\mathcal{D}\left(\eta,\left\{X_i\right\}_{i=1}^N\right) & =\inf _{u \in S^{p-1}} \mathbb{P}_{N}\left(H_{u, \eta}\right)=\min _{u \in S^{p-1}} \frac{1}{n_1} \sum_{i=1}^{N} \mathbf{1}_{\left\{u^T X_i \leq u^T \eta\right\}}.
\end{aligned}
$$
 The class of set functions $\left\{\mathbf{1}_{H_{u, \eta}}: u \in S^{d-1}, \eta \in \mathbb{R}^d\right\}$ consists of all half spaces in $\mathbb{R}^d$ and hence has VC dimension $d+1$.

In the matter of the coherence with the preceding expressions, we consider the use of its equivalent definition\begin{equation}\label{t}
\hat{\theta}=\arg \min _{\eta \in \mathbb{R}^d}\sup_{v \in \mathcal{B}_0^*}\sum_{i=1}^{N}\mathbf{1}_{\left\langle X_i-\eta,v\right\rangle > 0},
\end{equation}
when (\ref{t}) has multiple maxima, $\hat{\theta}$ is understood as any vector that attains the deepest level. As is known from \cite{rousseeuw1999depth}, the maximum depth $ N\mathcal{D}\left(\hat{\theta},\left\{X_i\right\}_{i=1}^N\right) $ is bounded below by $ \lceil N /(d+1)\rceil $. Because of the natural form of boolean functions, it suggests us to use the VC-dimension technique. From now on, we take our estimator, the Tukey median of mean, as\[\hat{\mu}=\arg \min _{\eta \in \mathbb{R}^d}\sup_{v \in \mathcal{B}_0^*}\sum_{k=1}^{K}\mathbf{1}_{\left\langle \bar{X}_k-\eta,v\right\rangle > 0}.\]  Then we obtain the following error bound:
\begin{theorem}\label{ttt}
	For $ \varepsilon $-corruption samples from an unknown distribution $ \mathbb{P} $, with only finite second moment, there exists a universival constant $C$ such that if $K \geq C\left(\mathrm{VC}\left(\mathcal{B}_0^*\right) \vee|\mathcal{O}|\right)$, then, with probability larger than $1-\exp (-CK)$,
	$$
	\left\|\hat{\mu}_K-\mu\right\| \leq C \sup _{v \in \mathcal{B}_0^*} \mathbb{E}\left(\left\langle Y_1-\mu, v\right\rangle^2\right)^{1 / 2} \sqrt{\frac{K}{N}} .
	$$
\end{theorem}
In particular, if we use the Euclidean distance, then for any $\delta \in\left[e^{-c N}, 1 / 2\right]$, there exists an estimator $\hat{\mu}$ such that with probability at least $ 1-\delta $,
\begin{equation}\label{tt}
\left\|\hat{\mu}-\mu\right\|_{2} \lesssim\left\|\Sigma^{1 / 2}\right\| \left(\sqrt{\frac{d}{N}}+\sqrt{\frac{\log (1 / \delta)}{N}}+\sqrt{\varepsilon}\right).
\end{equation}

\begin{remark}
By adapting Tukey's median to median-of-means method, we extend the robust estimation to heavy tailed settings. In contrast to  Chen et. al. \cite{chen2018robust}, who demonstrated that Tukey's median is effective only in Gaussian and elliptical distributions, our approach broadens its applicability..
\end{remark}

\section{Numerical Studies}
Despite its theoretical appeal, the MOM estimation method proposed by Lugosi and Mendelson, is often computationally intractable. While polynomial-time algorithms exist for specific cases like the mean estimator by Hopkins \cite{hopkins2020mean}, general solutions remain challenging due to exponential growth in complexity, making them impractical for larger datasets. We use the approximation algorithm proposed by \cite{pmlr-v125-lei20a} and write as \textbf{aMOM}. 

Because our implementation is an approximate adaptation (not a literal reproduction of every constant), we adopt the theory-backed scaling $K=\Theta(\log(1/\delta))$ and enforce $K\le N/2$. The spectral inner loop operates on the $K$ bucket means, and its runtime scales roughly as $O(K^2 d)$. Therefore, one can see:
\begin{itemize}
	\item Larger $K$: stronger theoretical robustness, but higher runtime and potentially noisier per-bucket means if $K$ becomes too large relative to $n$.
	\item Smaller $K$: faster in practice, but with weaker formal guarantees (the failure probability bound may no longer match the target $\delta$).
\end{itemize} In practice, one can set
\[
K \;=\; \min\!\big(\,\lceil 500 \log(1/\delta)\rceil,\; \lfloor N/2 \rfloor\,\big)
\]
using a smaller constant $C$ (e.g., less than 500)  to reduce runtime; this typically works well empirically but relaxes the strict theoretical guarantee.  

The computation of depth-based estimators is particularly challenging. While some progress has been made in low-dimensional settings, such as the study on the bivariate Tukey's median \cite{d27883af-202c-34ce-a5f7-a9f82133dba0}, the development of efficient algorithms is fundamentally constrained. The optimal time complexity is $ O(N^d) $ for higher dimension $ d $ according to \cite{nd}. Meanwhile, our theoretical framework suggests natural extensions to covariance estimation and principal component analysis, the computational implementation of these extensions presents significant challenges that require further investigation. We are currently developing efficient algorithms for MOM-based covariance estimation and plan to address these applications in future work.

Before illustrating the setting of experiments, we introduce some robust estimators. The core idea behind Huber’s M-estimator \cite{robust} is to minimize the huber loss function of the residuals,  rather than the sum of squared residuals. The MCD estimator identifies a subset of observations whose sample covariance matrix has the smallest determinant. The Fast MCD algorithm \cite{lirias356358} was developed to make this computationally feasible for larger datasets. MCD can be computed by the package \textsf{MASS}.

\begin{figure}[htbp!]
	\centering
	\includegraphics[width=1\linewidth]{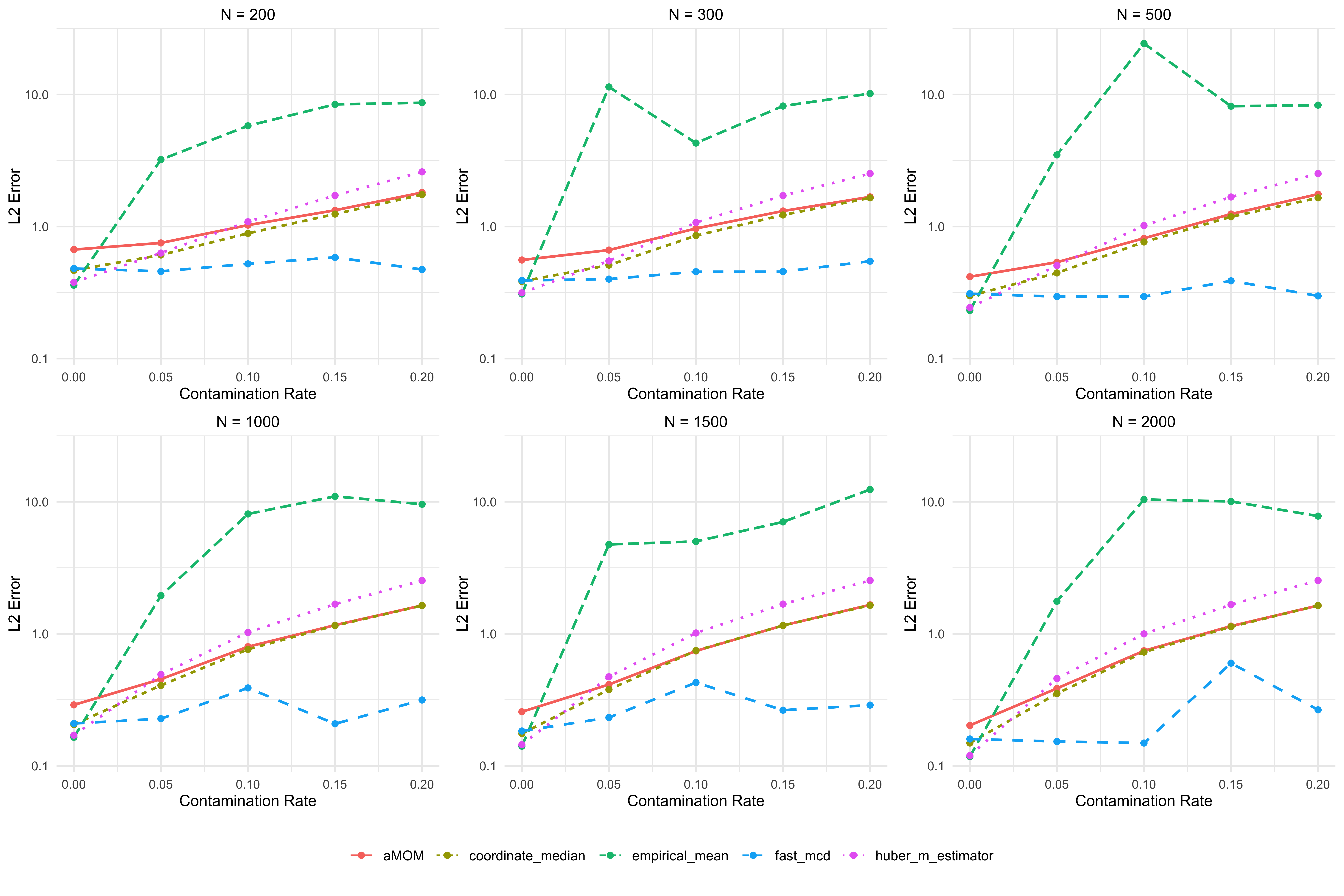}
	\caption{Logarithmic $\ell_2$ estimation error versus contamination proportion ($\varepsilon$)
		for \textbf{Setting 1}: Gaussian inliers with heavy-tailed Cauchy outliers.
		Each curve corresponds to a different estimator: Empirical Mean, Coordinate-wise Median,
		Huber’s M-estimator, Minimum Covariance Determinant (MCD), and the proposed Median-of-Means (MOM) estimator.
		Sample sizes $N \in \{200, 300, 500, 1000, 1500, 2000\}$ with fixed dimension $d = 10$ are used.
		This plot demonstrates that the MOM estimator remains robust even as the contamination level increases,
		maintaining significantly lower estimation error growth compared to classical estimators. }
	\label{fig:0}
\end{figure}

\begin{table}[htbp!]
	\centering
	\caption{Estimation errors for Setting 1 when $\varepsilon = 0$.}
	\label{t1}
	\begin{tabular}{ccccccccc}
		\hline
		$N$ & Empirical Mean & Coordinate Median & Huber's M & MCD & MOM \\
		\hline 
		200&		0.36&	0.47&	0.38&	0.48&	0.67\\
		300&		0.31&	0.38&	0.31&	0.39&	0.56\\
		500&		0.23&	0.3	&	0.24&	0.31&	0.42\\
		1000&	0.16&	0.21&	0.17&	0.21&	0.29\\
		1500&	0.14&	0.18&	0.14&	0.18&	0.26\\
		2000&	0.12&	0.15&	0.12&	0.16&	0.2\\
		\hline
	\end{tabular}
\end{table}

\begin{figure}[htbp!]
	\centering
	\includegraphics[width=1\linewidth]{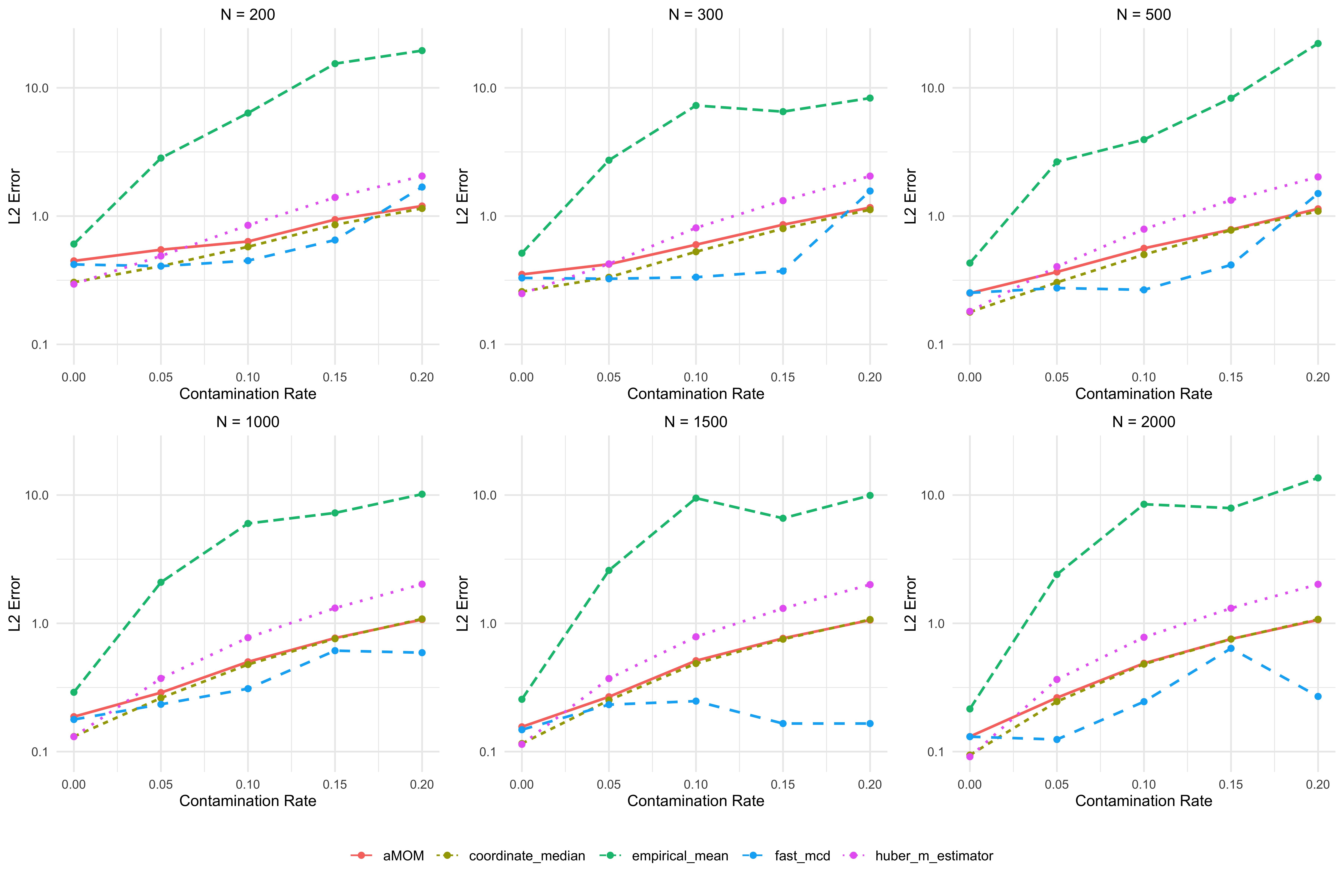}
	\caption{Logarithmic $\ell_2$ estimation error versus contamination proportion ($\varepsilon$)
		for \textbf{Setting 2}: Heavy-tailed Student’s $t_{\nu=2.1}$ inliers with heavy-tailed Cauchy outliers.
		Estimator performance is compared across the same five methods as in Setting 1.
		The experiment highlights the ability of MOM to accurately estimate the mean
		under simultaneous heavy-tailed inlier and outlier distributions,
		exhibiting strong robustness and stable error scaling.}
	\label{fig:1}
\end{figure}

\begin{table}[htbp!]
	\centering
	\caption{Estimation errors for Setting 2 when $\varepsilon = 0$.}
	\label{t2}
	\begin{tabular}{ccccccccc}
		\hline
		$N$ & Empirical Mean & Coordinate Median & Huber's M & MCD & MOM \\
		\hline 
		200&		0.61&	0.3&	0.29&	0.42&	0.45\\
		300&		0.51&	0.26&	0.25&	0.33&	0.35\\
		500&		0.43&	0.18&	0.18&	0.25&	0.25\\
		1000&		0.29&	0.13&	0.13&	0.18&	0.19\\
		1500&		0.26&	0.12&	0.11&	0.15&	0.16\\
		2000	&	0.22&	0.09&	0.09&	0.13&	0.13\\
		\hline
	\end{tabular}
\end{table}
\begin{figure}[htbp]
	\centering
	\includegraphics[width=1\linewidth]{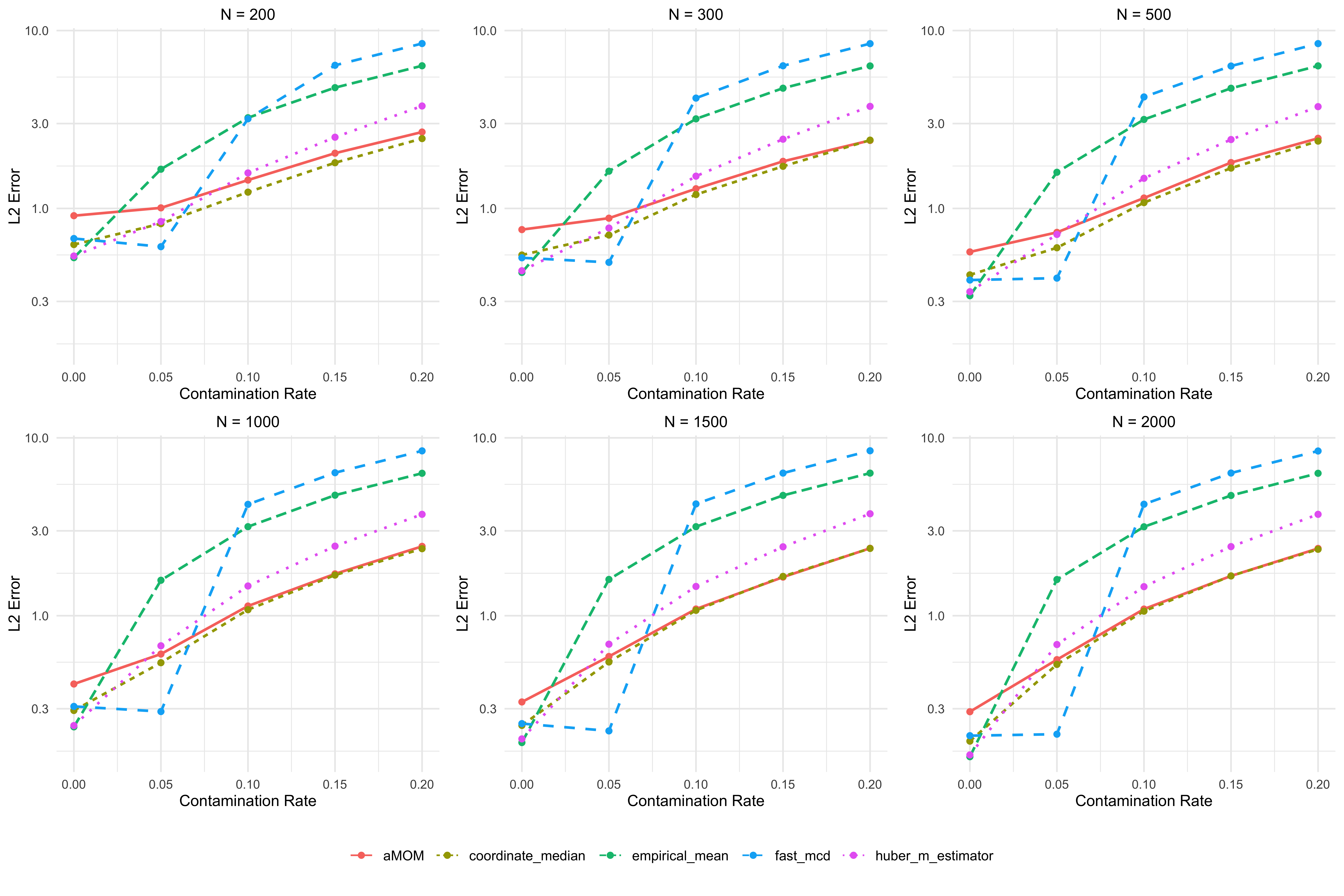}
	\caption{Logarithmic $\ell_2$ estimation error versus contamination proportion ($\varepsilon$)
		for \textbf{Setting 3}: Heteroscedastic Gaussian inliers $N(0, \Sigma)$ with diagonal covariance
		$\Sigma = \text{diag}(1,2,\dots,d)$ and point-mass outliers at $10\cdot \mathbf{1}_d$.
		This experiment evaluates estimator performance under variance heterogeneity and extreme outliers.
		The MOM estimator continues to display robust behavior, adapting to non-identical variances across dimensions
		while providing stable estimation error under contamination.}
	\label{fig:2}
\end{figure}

\begin{table}[htbp]
	\centering
	\caption{Estimation errors for Setting 3 when $\varepsilon = 0$.}
	\label{t3}
	\begin{tabular}{ccccccccc}
		\hline
		$N$ & Empirical Mean & Coordinate Median & Huber's M & MCD & MOM \\
		\hline 
		200&		0.53&	0.63&	0.54&	0.68&	0.91\\
		300	&	0.44&	0.55&	0.45&	0.53&	0.76\\
		500	&	0.32&	0.42&	0.34&	0.4&	0.57\\
		1000	&	0.24&	0.29&	0.24&	0.31&	0.41\\
		1500&		0.19&	0.24&	0.2&	0.25&	0.33\\
		2000	&	0.16&	0.2&	0.16&	0.21&	0.29\\
		\hline
	\end{tabular}
\end{table}
\begin{figure}[htbp]
	\centering
	\includegraphics[width=1\linewidth]{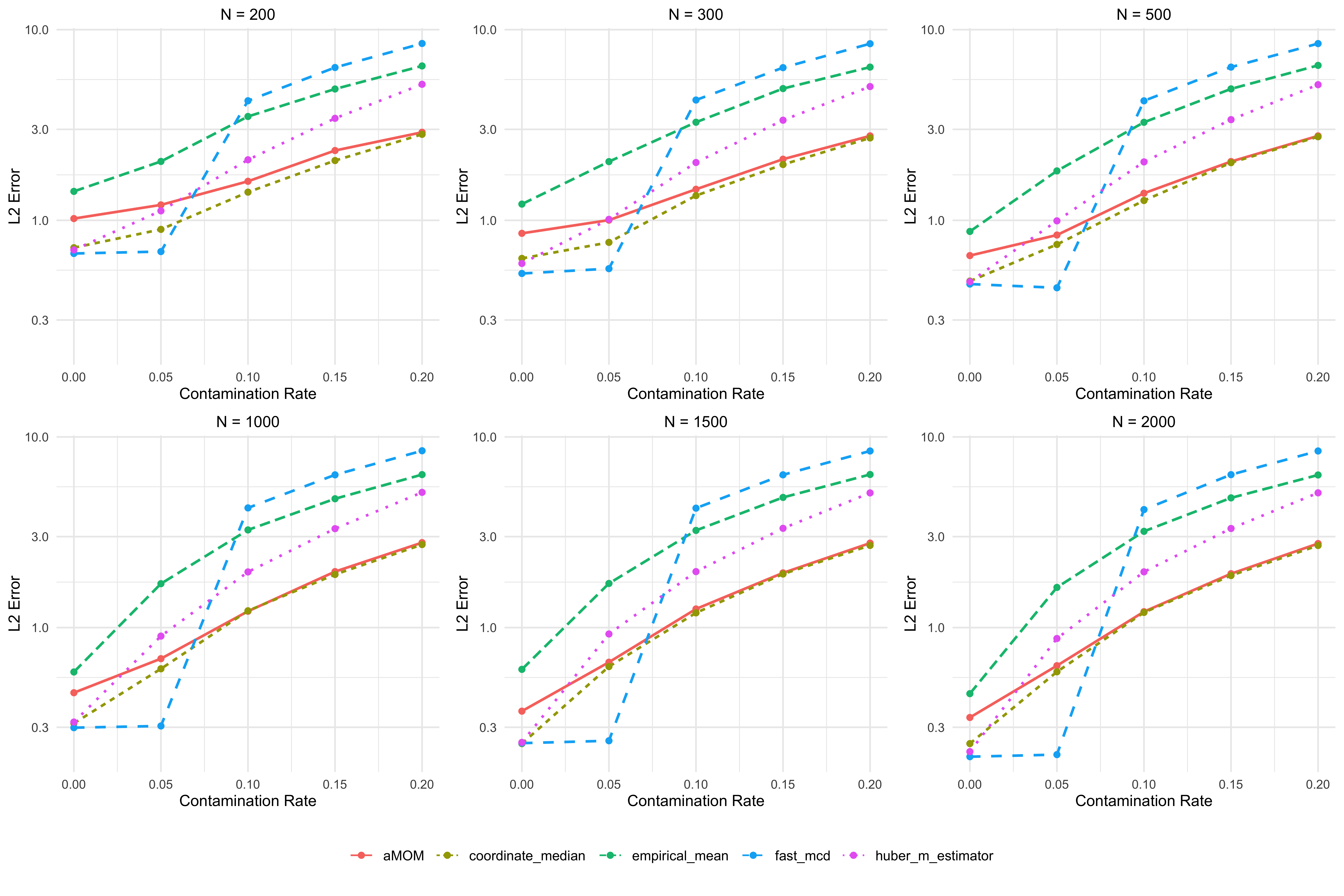}
	\caption{Logarithmic $\ell_2$ estimation error versus contamination proportion ($\varepsilon$)
		for \textbf{Setting 4}: Heteroscedastic heavy-tailed Student’s $t_{\nu=2.1}$ inliers
		with diagonal scale matrix $\Sigma = \text{diag}(1,2,\dots,d)$ and point-mass outliers at $10\cdot \mathbf{1}_d$.
		This more challenging scenario combines heavy tails, heteroscedasticity, and adversarial outliers.
		The MOM estimator remains competitive, showing controlled error growth even under simultaneous multiple difficulties.}
	\label{fig:3}
\end{figure}

\begin{table}[htbp]
	\centering
	\caption{Estimation errors for Setting 4 when $\varepsilon = 0$.}
	\label{t4}
	\begin{tabular}{ccccccccc}
		\hline
		$N$ & Empirical Mean & Coordinate Median & Huber's M & MCD & MOM \\
		\hline 
		200	&	1.42&	0.72&	0.7&	0.67&	1.02\\
		300	&	1.22&	0.63&	0.59&	0.53&	0.85\\
		500	&	0.87&	0.48&	0.48&	0.46&	0.65\\
		1000&	0.59&	0.31&	0.32&	0.3&	0.46\\
		1500	&	0.6&	0.25&	0.25&	0.25&	0.36\\
		2000	&	0.45&	0.25&	0.22&	0.21&	0.34\\
		\hline
	\end{tabular}
\end{table}
\begin{figure}[htbp]
	\centering
	\includegraphics[width=1\linewidth]{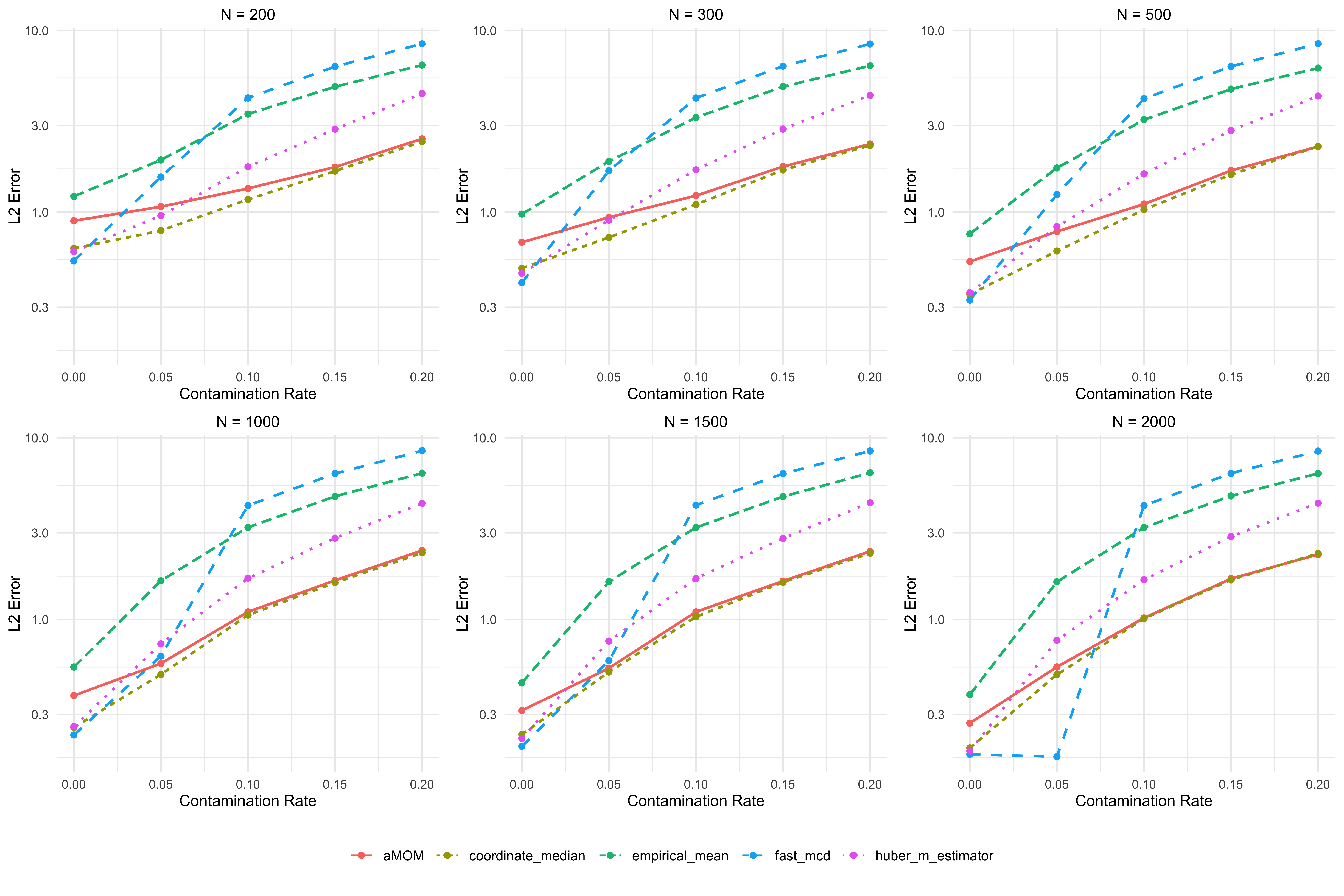}
	\caption{Logarithmic $\ell_2$ estimation error versus contamination proportion ($\varepsilon$)
		for \textbf{Setting 5}: Correlated heavy-tailed Student’s $t_{\nu=3}$ inliers with autoregressive (AR(1)) covariance
		structure $\Sigma_{ij} = 4 \cdot (0.5)^{|i-j|}$ and point-mass outliers at $10\cdot \mathbf{1}_d$.
		This setting evaluates estimator robustness in the presence of correlation, heavy tails, and strong contamination.
		The MOM estimator consistently achieves low estimation error and demonstrates strong resistance to distributional complexity.
	}
	\label{fig:4}
\end{figure}

\begin{table}[htbp]
	\centering
	\caption{Estimation errors for Setting 5 when $\varepsilon = 0$.}
	\label{t5}
	\begin{tabular}{ccccccccc}
		\hline
		$N$ & Empirical Mean & Coordinate Median & Huber's M & MCD & MOM \\
		\hline 
		200	&	1.22&	0.63&	0.61&	0.54&	0.9\\
		300	&	0.98&	0.49&	0.46&	0.41&	0.68\\
		500	&	0.76&	0.35&	0.36&	0.33&	0.54\\
		1000&		0.55&	0.25&	0.26&	0.23&	0.38\\
		1500&		0.45&	0.23&	0.22&	0.2&	0.32\\
		2000	&	0.39&	0.2&	0.19&	0.18&	0.27\\
		\hline
	\end{tabular}
\end{table}

\subsection{Simulation Settings}
\label{sec:simulations}

To evaluate the performance of the proposed estimator, we conduct a series of Monte Carlo simulations under various data-generating processes. In each simulation, the dataset is generated from a contamination model. Let $\mathcal{P}_0$ be the distribution of the clean (inlier) data and $\mathcal{Q}$ be the distribution of the contaminating (outlier) data. The observed dataset $\mathcal{X} = \{X_1, \dots, X_N\}$ of size $N$ consists of $n_{\text{clean}}$ samples drawn from $\mathcal{P}_0$ and $n_{\text{contaminated}}$ samples drawn from $\mathcal{Q}$. The contamination proportion is denoted by $\varepsilon = n_{\text{contaminated}} / N$.

Our numerical experiments are configured with the following parameters: sample sizes $ N \in \{200, 300, 500, 1000, 1500, 2000\} $, a fixed dimension $ d = 10 $, and contamination proportions $ \varepsilon\in\{0, 0.05,0.1,0.15, 0.2\}$.  The estimation error, measured by the $ l_2 $-norm, is averaged over 50 independent trials. 

All simulations are performed in a $d$-dimensional space. The true mean of the clean data is the zero vector, i.e., $\mu_0 = \mathbf{0}_d \in \mathbb{R}^d$. We consider five distinct settings designed to test the estimators' robustness against different types of distributions and contamination schemes. 
Moreover, for the case $ \varepsilon=0 $, we also demonstrate the statistical efficiency of these robust estimators in Table \ref{t1}-\ref{t5}.

\begin{itemize}
	\item[\textbf{Setting 1:}] \textbf{Gaussian Inliers with Heavy-Tailed Outliers.}
	This is a canonical setting for robust estimation. The inlier distribution $\mathcal{P}_0$ is a multivariate normal distribution with a spherical covariance structure:
	\[
	\mathcal{P}_0 = \mathcal{N}(\mathbf{0}_d, 3\mathbf{I}_d),
	\]
	where $\mathbf{I}_d$ is the $d \times d$ identity matrix. The contamination distribution $\mathcal{Q}$ generates outliers from a multivariate Cauchy distribution (a t-distribution with 1 degree of freedom) shifted far from the origin. Specifically, an outlier $X_{\text{out}} \sim \mathcal{Q}$ is generated as $X_{\text{out}} = Z + 10 \cdot \mathbf{1}_d$, where each component of $Z \in \mathbb{R}^d$ is drawn independently from a standard Cauchy distribution, and $\mathbf{1}_d$ is a vector of ones.
	
	\item[\textbf{Setting 2:}] \textbf{Heavy-Tailed Inliers with Heavy-Tailed Outliers.}
	This setting tests the estimator's ability to distinguish between heavy-tailed inliers and outliers. The inlier distribution $\mathcal{P}_0$ is heavy-tailed, where each coordinate is independently drawn from a Student's t-distribution with $\nu=2.1$ degrees of freedom, centered at the origin. The contamination distribution $\mathcal{Q}$ is identical to that in Setting 1 (shifted Cauchy distribution).
	
	\item[\textbf{Setting 3:}] \textbf{Heteroscedastic Gaussian Inliers with Point-Mass Outliers.}
	Here, we introduce non-identical variances across dimensions for the inlier data. The inlier distribution $\mathcal{P}_0$ is a multivariate normal distribution with a diagonal, heteroscedastic covariance matrix:
	\[
	\mathcal{P}_0 = \mathcal{N}(\mathbf{0}_d, \Sigma), \quad \text{where } \Sigma = \text{diag}(1, 2, \dots, d).
	\]
	The contamination $\mathcal{Q}$ is a point-mass distribution, where all outliers are fixed at a single point far from the true mean: $\mu_{\text{cont}} = 10 \cdot \mathbf{1}_d$.
	
	\item[\textbf{Setting 4:}] \textbf{Heteroscedastic Heavy-Tailed Inliers with Point-Mass Outliers.}
	This setting combines the challenges of heavy-tailed inliers and heteroscedasticity. The inlier distribution $\mathcal{P}_0$ is a multivariate Student's t-distribution with $\nu=2.1$ degrees of freedom, centered at the origin, and a diagonal scale matrix $\Sigma = \text{diag}(1, 2, \dots, d)$. The contamination distribution $\mathcal{Q}$ is the same point-mass distribution as in Setting 3, located at $10 \cdot \mathbf{1}_d$.
	
	\item[\textbf{Setting 5:}] \textbf{Correlated Heavy-Tailed Inliers with Point-Mass Outliers.}
	Finally, we investigate the effect of correlated features in the inlier data. The inlier distribution $\mathcal{P}_0$ is a multivariate Student's t-distribution ($t_{\nu}(\mu, \Sigma)$) with $\nu=3$ degrees of freedom and centered at $\mu_0 = \mathbf{0}_d$. The scale matrix $\Sigma$ has an autoregressive (AR(1)) structure, where the entry $(i, j)$ is given by:
	\[
	\Sigma_{ij} = 4 \cdot (0.5)^{|i-j|}.
	\]
	This structure introduces a moderate correlation between adjacent dimensions. The contamination distribution $\mathcal{Q}$ is again the point-mass distribution located at $10 \cdot \mathbf{1}_d$.
\end{itemize}

\subsection{High-Dimensional Performance}
Although high-dimensional scenarios are not specifically addressed in our theoretical framework, we now present additional Monte Carlo simulations to this end, involving 
$ d=100 $
and 
$ d=200 $
, which are conducted under \textbf{Setting 2} (i.e., heavy-tailed inliers with heavy-tailed outliers).  The estimation error, measured by the $ l_2 $-norm, is averaged over 50 independent trials, and other settings are the same as when $ d=10 $.

\begin{figure}[htbp]
	\centering
	\includegraphics[width=1\linewidth]{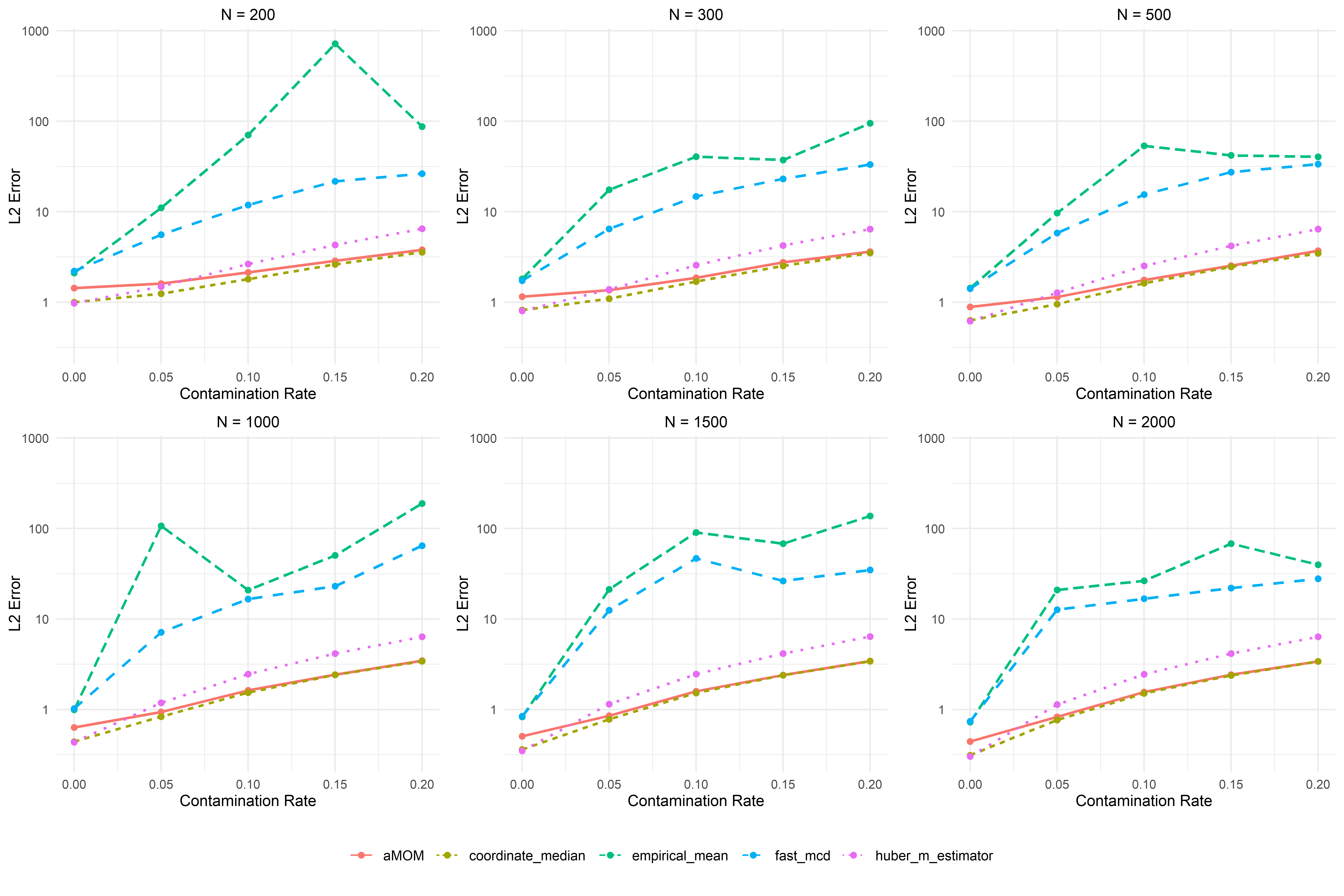}
	\caption{Logarithmic $\ell_2$ estimation error versus contamination proportion ($\varepsilon$)
		for \textbf{Setting 2}: Heavy-tailed Student’s $t_{\nu=2.1}$ inliers with heavy-tailed Cauchy outliers when $ d=100 $.
	}
	\label{fig:100}
\end{figure}

\begin{figure}[htbp]
	\centering
	\includegraphics[width=1\linewidth]{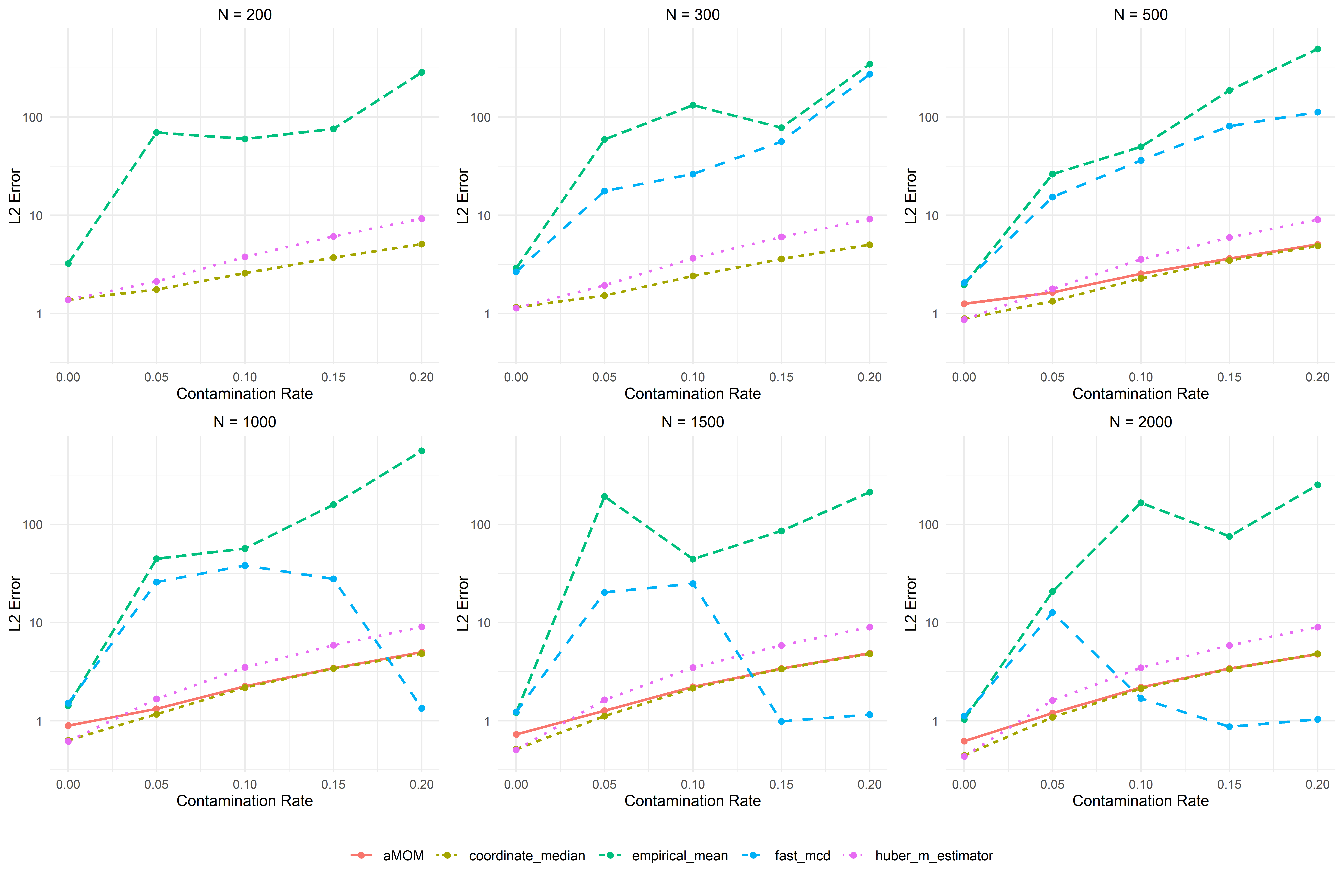}
	\caption{Logarithmic $\ell_2$ estimation error versus contamination proportion ($\varepsilon$)
		for \textbf{Setting 2}: Heavy-tailed Student’s $t_{\nu=2.1}$ inliers with heavy-tailed Cauchy outliers when $ d=200 $.
	}
	\label{fig:200}
\end{figure}

The simulation results, as illustrated in the figures and tables, provide a clear comparison of the estimators' performance under various challenging conditions. Our key findings are as follows:

The empirical mean is shown to be highly non-robust and entirely unsuitable for mean estimation in the presence of heavy-tailed distributions and data contamination. Its performance degrades catastrophically even with a small fraction of outliers. The MCD estimator performs well under specific distributional assumptions. However, it is highly sensitive to the proportion of contamination and dimension. As the contamination level approaches its breakdown point, the accuracy of the MCD estimator deteriorates sharply, indicating its vulnerability in heavily contaminated scenarios.

In contrast, the MOM estimator demonstrates strong and consistent robustness. It reliably handles both heavy-tailed data and a significant fraction of outliers, maintaining low estimation error across most of our simulated settings, even when the  dimension $ d $ is high. 

\subsection{Real Data Analysis}

We utilized a multi-dimensional financial dataset sourced from the Federal Reserve Economic Data (FRED) database (available in R package \textsf{quantmod}) for real world data. The dataset comprises 12 time-series variables, including U.S. Treasury constant maturity rates and the U.S. Dollar to Euro exchange rate, spanning from January 2020 to December 2024.

To ensure stationarity, a preprocessing protocol, including the removal of missing values, was applied. The resulting clean dataset consists of $N=1245$ observations and $d=12$ dimensions. The inter-variable correlation structure is non-trivial, with an average correlation coefficient of 0.432 and a range from -0.258 to 0.979, reflecting typical characteristics of financial markets. To simulate the presence of outliers, a contaminated dataset was generated: a fraction $\varepsilon = 0.1$ of the samples were randomly selected or contamination. For each selected sample, a subset of its dimensions (between 1 and 5, chosen uniformly at random) was corrupted. The contamination was additive, with noise drawn from a normal distribution $\mathcal{N}(0, 3^2)$, representing significant but not extreme outliers.

To assess robustness without relying on a known ground-truth mean, we employed an evaluation framework based on 5-fold cross-validation (CV) of 3 repeats. The CV risk for a given fold was calculated as the median of these squared losses over all points in the test set. The primary metric for comparing estimator robustness is the Robustness Ratio, defined as:
\begin{equation*}
\text{Robustness Ratio} = \frac{\text{Mean CV Risk on Contaminated Data}}{\text{Mean CV Risk on Clean Data}}
\end{equation*}
A ratio close to 1 indicates high robustness, implying that the estimator's predictive performance is minimally affected by data contamination.

\begin{table}[]
	\centering
	\caption{Robustness Ratio of five location estimators. Lower values indicate superior robustness.}
	\label{t6}
	\begin{tabular}{lccccc}
		\hline
		\textbf{Method} & Empirical Mean & Coordinate Median & Huber'M & MCD &MOM  \\
		\hline
		\textbf{Ratio}  & 1.323  & 1.233 & 1.250 & 1.255 &\textbf{1.220} \\
	\end{tabular}
\end{table}
The primary finding of this study is the comparative robustness of the five estimators, as summarized by the Robustness Ratio in Table \ref{t6}.
Our MOM estimator achieved a Robustness Ratio of 1.220, the lowest among all tested methods. This indicates that it is robust when faced with 10\% data contamination, demonstrating superior stability. Other robust estimators show moderate performance. As expected, the empirical mean was highly sensitive to contamination, with its CV risk increasing by 32.3\%, confirming its unsuitability for contaminated data.

\begin{figure}[h!]
	\centering
	\includegraphics[width=\textwidth]{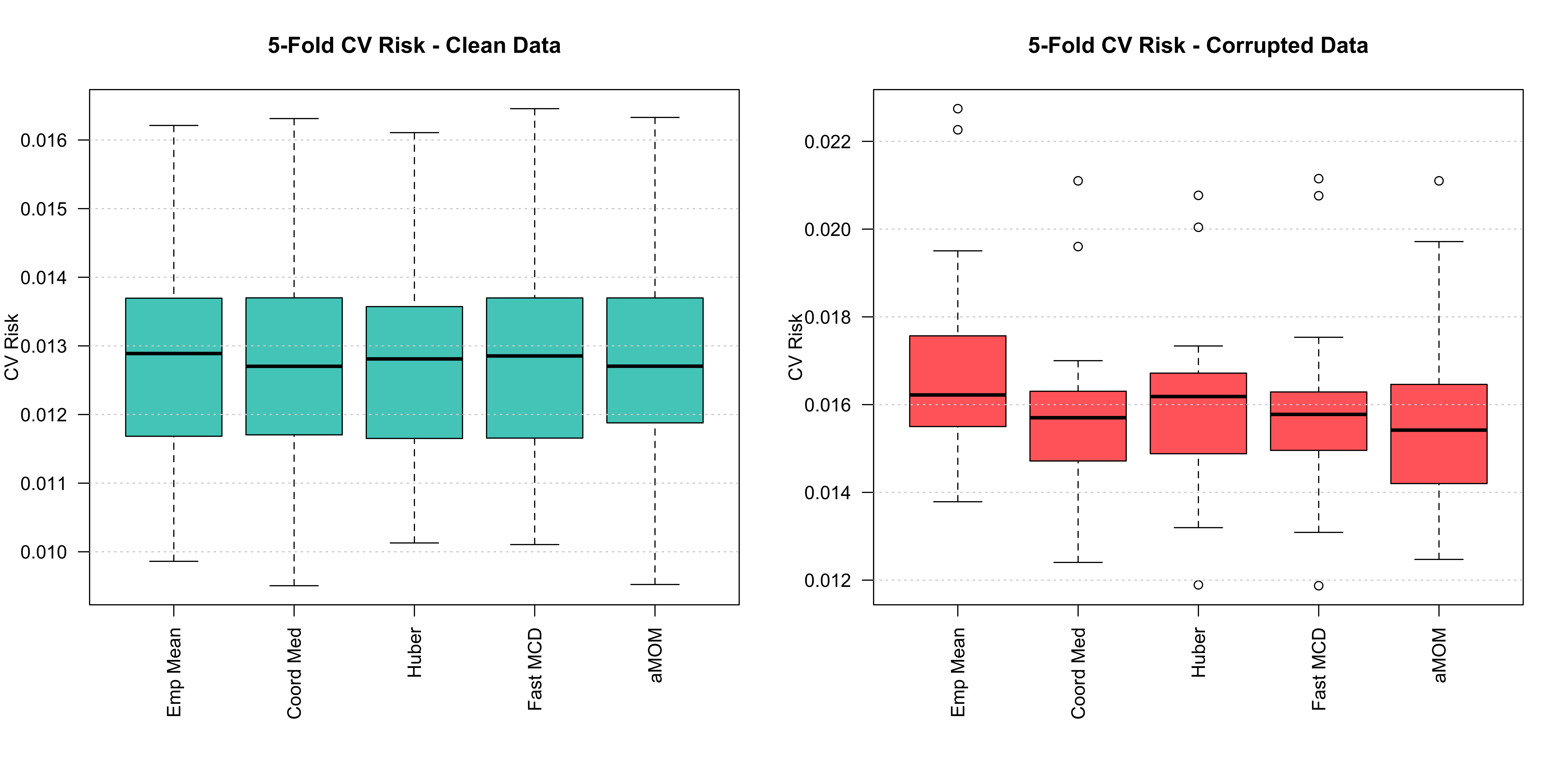}
	\caption{Comparison of 5-fold cross-validated risk distributions for each estimator on Clean Data (left) and Corrupted Data (right). The boxes represent the interquartile range (IQR), the central line is the median, and the whiskers extend to 1.5 times the IQR. Circles denote outliers.}
	\label{fig:risk_boxplots}
\end{figure}
The boxplots in Figure \ref{fig:risk_boxplots} provide deeper insights into the performance distributions:

\begin{itemize}
	\item \textbf{Performance on Clean Data}: On the clean dataset, all five estimators demonstrated nearly identical CV risk distributions, with median values tightly clustered around 0.0128. This is a crucial observation, indicating that the robust estimators do not sacrifice significant efficiency under ideal (uncontaminated) conditions.
	\item \textbf{Performance on Contaminated Data}: On the contaminated dataset, a clear performance separation emerges. The empirical mean exhibits the highest median CV risk and the largest variance, with several high-leverage points. In contrast, the four robust estimators, especially MOM, successfully controlled the increase in risk, showing lower medians.
\end{itemize}

This analysis demonstrates that, in a mulit-dimensional financial data setting, robust estimators offer a significant advantage over the classical empirical mean when data is contaminated. Among the evaluated methods, the MOM estimator emerges as the most robust choice. This suggests its strong potential as a reliable tool for central tendency estimation in financial applications where data quality cannot be guaranteed.

\section{Discussion}
  As noted earlier, the method we present can be implemented in mean estimation, covariance estimation, and other learning problems.
  The problem of regression functions estimation involves estimating conditional expectations, making it a natural extension of the mean estimation concepts discussed in this paper.

  	For instance, the standard framework for regression function estimation is as follows. Consider a pair of random variables $(Y, V)$, where $Y$ takes values in the set $\mathcal{X}$ and $V$ is real-valued. In a class $\mathcal{F}$ comprising real-valued functions defined on $\mathcal{X}$, the goal is to identify $f \in \mathcal{F}$ such that $f(Y)$ serves as a reliable prediction of $V$. The efficacy of a predictor $f \in \mathcal{F}$ is measured through the mean-squared error $\mathbb{E}(f(Y)-V)^2$, known as the risk. The optimal performance within the class is achieved by the risk minimizer	
\begin{equation*}
  	f^*=\underset{f \in \mathcal{F}}{\operatorname{argmin}} \mathbb{E}(f(Y)-V)^2.
\end{equation*}
  	The joint distribution of $(Y, V)$ is usually unknown. Instead, an i.i.d. sample $\mathcal{D}_N=\left(Y_i, V_i\right)_{i=1}^N$ is provided, distributed according to the joint distribution of $Y$ and $V$. Given a sample size $N$, a learning procedure is a mapping assigning to each sample $\mathcal{D}_N$ a function in $\mathcal{F}$, denoted as $\widehat{f}$.
  	
  	The effectiveness of $\widehat{f}$ is assessed based on the trade-off between accuracy $\epsilon$ and confidence $\delta$ in which $\widehat{f}$ achieves that accuracy. In other words, one seeks $\widehat{f}$ that satisfies the condition

\begin{equation*}
  	\mathbb{P}\left(\mathbb{E}\left((\widehat{f}(Y)-V)^2 \mid \mathcal{D}_N\right) \leq \inf _{f \in \mathcal{F}} \mathbb{E}(f(Y)-V)^2+\epsilon\right) \geq 1-\delta
\end{equation*}
for values of $\epsilon$ and $\delta$ as small as possible. The exploration of this accuracy/confidence trade-off has been the focus of extensive research: see \cite{mendelson2017optimal,lecue2017regularization,lecue2018regularization,LM2}.
Consider the standard linear regression setting, where, $ \mathcal{F}=\{\left\langle\beta, Y\right\rangle: \beta\in\mathbb{R}^d\}$, and
	$$
	\beta^*=\underset{\beta \in \mathbb{R}^d}{\operatorname{argmin}} l(\beta)=\underset{\beta \in \mathbb{R}^d}{\operatorname{argmin}} \mathbb{E}\left(V_1-\left\langle\beta, Y_1\right\rangle\right)^2 .
	$$
	Since for all $\beta$,
	$
	l(\beta)-l\left(\beta^*\right)=2 \mathbb{E}\left(\xi_1\left\langle\beta-\beta^*, Y_1\right\rangle\right)+\left(\beta-\beta^*\right) \Sigma\left(\beta-\beta^*\right) \leq \left(\beta-\beta^*\right) \Sigma\left(\beta-\beta^*\right),
	$
	the key to control excess risk is to bound the $ \left\| \beta-\beta^*\right\|_{\Sigma}= \left(\beta-\beta^*\right) \Sigma\left(\beta-\beta^*\right) .$ 

Brownless et. al. \cite{brownlees2015empirical} show that, using the generic chaining method, Catoni estimator can also be linked to the upper bounds of random processes, which can be used to analyze and control the upper bounds of risks in the empirical risk minimization process. 

For practical implementation, we can employ the R package \textsf{TukeyRegion}, which implements the fast algorithm proposed by \cite{lirias356358}, to compute Tukey's median. Through a median-of-means approach, we reduce the computational complexity of the Tukey's median estimator to $ O(K^d) $; nonetheless, it remains computationally intensive, especially in high dimensions.
Chen et. al. \cite{chen2018robust} defines the matrix depth of a positive semidefinite $\Gamma \in \mathbb{R}^{d \times d}$ with respect to a distribution $\mathbb{P}$ and empirical distribution $\mathbb{P}_N$. They also provide a framework for computing the deepest matrix estimator, which serves as an inspiration for us to propose corresponding estimators based on MOM. We hypothesize that the error rate associated with these estimators will be comparable to that of the estimators derived from the matrix depth approach.

Moreover, in the multivariate setting, different definitions of the median lead to different MOM estimators.  Apart from the Tukey median,
many other types of median have been developed, such as coordinate-wise median, the geometric (or spatial) median, Oja median, and Liu median, among others; see \cite{small1990survey} for a survey. 

The introduction of VC dimension has provided new ideas for estimating the upper bounds of empirical processes. It must be reiterated that we have circumvented the Rademacher complexity and, correspondingly, replaced it with the VC dimension term. The critical issue in the proof of bound errors is to find the proper boolean functions class and the use of Lemma \ref{l}.  Consequently, in the estimation of the error bound, the focus is more on the dimensional structure of the set/space, thereby mitigating the impact of heavy tails and contamination in the samples. At the same time, we have only addressed the statistical convergence rates in the estimation problem without considering efficiency concerns in the computation. 

\section*{Acknowledgement}Yuxuan Wang and Lixin Zhang were supported by grants from National Key R\&D Program of China (No. 2024YFA1013502) and  NSF of China  (Grant Nos. U23A2064 and 12031005). Hanchao Wang  was supported by the National Natural Science Foundation of China (No. 12071257 and No. 11971267); National Key R\&D Program of China (No. 2018YFA0703900 and No. 2022YFA1006104); Shandong Provincial Natural Science Foundation (No. ZR2019ZD41).

\section*{Disclosure statement}
The authors declare no conflict of interest.

\appendix
\section{Appendix: Lemmas and Technical Proofs}
There are some basic facts about VC dimension from Section 7 in \cite{sen2018gentle}.
\begin{proposition}
	\begin{enumerate}
		\item $\mathrm{VC}\left(S^{d-1}\right)=d+1$. If $F$ is a set of real-valued functions in a $k$-dimensional linear space, then $\operatorname{Pos}(F):=\left\{x \rightarrow \mathbf{1}_{f(x) \geq 0}, f \in F\right\}$ has VC dimension $k+1$ .
		
		\item For a function $g: \mathcal{Y} \rightarrow \mathcal{X}$, if we note $\mathcal{F} \circ g=\{f \circ g \mid f \in \mathcal{F}\}$, then we have $\operatorname{VC}(\mathcal{F} \circ g) \leq \operatorname{VC}(\mathcal{F})$.
		
		\item  For any $r>0, \mathrm{VC}\left(\left\{x \in E \rightarrow \mathbf{1}_{\langle x, v\rangle \geq r}, v \in C\right\}\right) \leq \mathrm{VC}(C-C) \lesssim \mathrm{VC}(C)$. 
	\end{enumerate}
\end{proposition}

The following lemma can be found in \cite{van2009note}:
\begin{lemma}\label{v}
	If $ \operatorname{VC}(\mathcal{C}_{i})=V_{i} ,$ $ i=1,\dots,m, $ 
	let $V \equiv \sum_{j=1}^m V_j$, and let
	$$\begin{aligned} \sqcup_{j=1}^m \mathcal{C}_j & \equiv\left\{\cup_{j=1}^m C_j: C_j \in \mathcal{C}_j, j=1, \ldots, m\right\}, \\ \sqcap_{j=1}^m \mathcal{C}_j & \equiv\left\{\cap_{j=1}^m C_j: C_j \in \mathcal{C}_j, j=1, \ldots, m\right\},\\\boxtimes_{j=1}^m \mathcal{C}_j &\equiv\left\{C_1 \times \ldots \times C_m: C_j \in \mathcal{C}_j, j=1, \ldots, m\right\}.\end{aligned}$$
	Then the following bounds hold:
	$$
	\left\{\begin{array}{l}
	V\left(\sqcup_{j=1}^m \mathcal{C}_j\right) \\
	V\left(\sqcap_{j=1}^m \mathcal{C}_j\right) \\
	V\left(\boxtimes_1^m \mathcal{C}_j\right)
	\end{array}\right\} \leq c_1 V \log \left(c_2 m\right), \\
	$$
	where $ c_1 = \frac{e}{(e-1) \log (2)} \approx 2.28231. $
\end{lemma}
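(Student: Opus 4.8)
The plan is to control the growth function (shatter coefficient) of each of the three derived classes by the \emph{product} of the growth functions of the constituent classes $\mathcal{C}_1,\dots,\mathcal{C}_m$, and then to convert this product bound into a VC-dimension bound via the Sauer--Shelah lemma together with an elementary optimization that pins down the exact constant. Throughout, write
$$
\Pi_{\mathcal{C}}(n)=\max_{x_1,\dots,x_n}\#\bigl\{C\cap\{x_1,\dots,x_n\}:C\in\mathcal{C}\bigr\}
$$
for the growth function of a class $\mathcal{C}$, so that $\operatorname{VC}(\mathcal{C})$ is the largest $n$ with $\Pi_{\mathcal{C}}(n)=2^n$.

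First I would establish the product bound. Fix points $x_1,\dots,x_n$. For the union class, the trace $\bigl(\cup_j C_j\bigr)\cap\{x_1,\dots,x_n\}=\cup_j\bigl(C_j\cap\{x_1,\dots,x_n\}\bigr)$ is completely determined by the tuple of individual traces $\bigl(C_j\cap\{x_1,\dots,x_n\}\bigr)_{j=1}^m$; since the $j$-th coordinate ranges over at most $\Pi_{\mathcal{C}_j}(n)$ possibilities, we get $\Pi_{\sqcup_j\mathcal{C}_j}(n)\leq\prod_{j=1}^m\Pi_{\mathcal{C}_j}(n)$. The intersection case is identical after replacing $\cup$ by $\cap$ (or by De Morgan duality). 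For the Cartesian-product class the points live in $\prod_j\mathcal{X}_j$; writing $x_i=(x_i^{(1)},\dots,x_i^{(m)})$, membership $x_i\in C_1\times\cdots\times C_m$ holds iff $x_i^{(j)}\in C_j$ for every $j$, so once again the trace is a function of the $m$ individual traces and the same product bound holds. Thus all three operations reduce to the single inequality $\Pi(n)\leq\prod_{j=1}^m\Pi_{\mathcal{C}_j}(n)$.

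Next I would invoke Sauer--Shelah, $\Pi_{\mathcal{C}_j}(n)\leq(en/V_j)^{V_j}$ for $n\geq V_j$, and combine the factors. Since $t\mapsto t\log(en/t)$ is concave, Jensen's inequality applied to the (nonzero) values $V_j$ with $\sum_j V_j=V$ gives
$$
\prod_{j=1}^m\left(\frac{en}{V_j}\right)^{V_j}\leq\left(\frac{enm}{V}\right)^{V}.
$$
If the derived class shatters $n$ points, then $2^n\leq\Pi(n)\leq(enm/V)^V$, i.e.\ $n\log 2\leq V\log(em)+V\log(n/V)$. The concluding step linearizes the logarithm via the tangent-line bound $\log(n/V)\leq\gamma(n/V)-\log\gamma-1$, valid for every $\gamma>0$; choosing $\gamma=\log 2/e$ and rearranging yields
$$
n\leq\frac{V\log(m/\gamma)}{\log 2-\gamma}=c_1\,V\log(c_2 m),\qquad c_1=\frac{e}{(e-1)\log 2},\quad c_2=\frac{e}{\log 2},
$$
which is exactly the claimed constant $c_1\approx 2.28231$.

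I expect the genuinely delicate part to be not the combinatorics but the extraction of the sharp constant: the product-of-growth-functions bound and Sauer--Shelah are routine, whereas obtaining the precise value $\tfrac{e}{(e-1)\log 2}$ requires the concavity reduction to the symmetric case $V_j=V/m$ and the correctly tuned parameter $\gamma=\log 2/e$ in the tangent-line step. I would also dispose of the edge cases separately: classes with $V_j=0$ have $\Pi_{\mathcal{C}_j}(n)=1$ and can be dropped before applying Jensen (which only lowers the effective number of terms and hence the bound), the regime $n<\max_j V_j$ needs the Sauer--Shelah estimate in the form valid for $n\geq V_j$, and the trivial case $V=0$ forces $n=0$ in agreement with the stated inequality.
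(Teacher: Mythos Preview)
The paper does not actually prove this lemma: it is stated with the preamble ``The following lemma can be found in \cite{vanderVaart_2009}'' and no argument is given. Your proposal is a correct, self-contained proof and follows what is essentially the standard route in that reference: (i) the product bound $\Pi(n)\le\prod_j\Pi_{\mathcal{C}_j}(n)$ for unions, intersections, and Cartesian products; (ii) the Sauer--Shelah estimate $\Pi_{\mathcal{C}_j}(n)\le(en/V_j)^{V_j}$; (iii) the concavity/Jensen reduction $\prod_j(en/V_j)^{V_j}\le(enm/V)^{V}$; and (iv) the tangent-line linearization of $\log(n/V)$ with the parameter $\gamma=\log 2/e$ that delivers the exact constant $c_1=e/((e-1)\log 2)$ and the companion constant $c_2=e/\log 2$. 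The edge-case bookkeeping you sketch (dropping classes with $V_j=0$ before applying Jensen, and noting that when $n<V$ the bound is immediate since $c_1\log(c_2 m)\ge 1$ for all $m\ge 1$) is adequate. In short, your argument is correct and coincides with the intended proof that the paper outsources to the citation.
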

The following is a classical result of Vapnik-Chervonenkis theory (more details can be found in \cite{vershynin2018high}), which shows the connection between the VC dimension and empirical processes.
\begin{lemma}[Empirical processes via $\mathrm{VC}$ dimension]\label{lc}
	Let $\mathcal{F}$ be a class of Boolean functions in a probability space $(\Omega, \mathcal{A}, \mathbb{P})$ with finite VC dimension $\operatorname{VC}(\mathcal{F}) \geq 1$. Let $X, X_1, X_2, \ldots, X_N$ be independent random points in $\Omega$ distributed according to the law $\mathbb{P}$. Then
	$$
	\mathbb{E} \sup _{f \in \mathcal{F}}\left|\frac{1}{N} \sum_{i=1}^N f\left(X_i\right)-\mathbb{E} f(X)\right| \leq C \sqrt{\frac{\operatorname{VC}(\mathcal{F})}{N}} .
	$$
\end{lemma}

\subsection{Proof of Theorem \ref{m}}
\begin{proof}
	Let $\mathcal{F}=\left\{\left(x_i\right)_{i \leq m} \rightarrow \mathbf{1}_{\left\langle\frac{1}{m} \sum_i x_i -\mu, v\right\rangle \geq r_K}, v \in \mathcal{B}_0^*(\mathcal{U})\right\}$, where$$
	r_K=4 \sup _{v \in \mathcal{B}_0^*(\mathcal{U})} \mathbb{E}\left(\left\langle Y_1-\mu, v\right\rangle^2\right)^{1 / 2} \sqrt{\frac{K}{N}}.
	$$ 
	For any $k \in [K]$, let $\mathbf{X}_k:=\left(X_i\right)_{i \in B_k}$ and $\mathbf{Y}_k:=\left(Y_i\right)_{i \in B_k}$. The functions $f \in \mathcal{F}$ are compositions of the function $x \rightarrow \frac{1}{m} \sum_i x_i-\mu$ and of the functions $x \rightarrow \mathbf{1}_{\langle x, v\rangle \geq r_K}$ for $v \in \mathcal{B}_0^*(\mathcal{U})$. The VC-dimension of the set of these compositions is smaller than the $\mathrm{VC}$-dimension of the set of indicator functions indexed by $\mathcal{B}_0^*(\mathcal{U})$. We just get $\mathrm{VC}(\mathcal{F}) \leq c_0 \mathrm{VC}(\mathcal{B}_0^*(\mathcal{U}))$ for some constant $c_0$.
	
	Notice that, the definition in (\ref{s}) is equivalent to$$S_{x^*}=\left\{y \in \mathcal{U}:\left|x^*\left(\bar{X}_k\right)-x^*(y)\right| \leq \epsilon \text { for more than } \frac{K}{2} \text { blocks }\right\} .$$For any $f\in\mathcal{F}$, or, for the corresponding $v \in \mathcal{B}_0^*(\mathcal{U})$, there exist at least $\left(K-\sum_{k=1}^K f\left(\mathbf{X}_k\right)\right)$ blocks $B_k$, where
	$$
	\left|\left\langle\bar{X}_k-\mu,v\right\rangle\right| \leq r_K.
	$$
	So it is sufficient to compute the sum of $f\left(\mathbf{X}_k\right)$. Now we write
	
	\begin{equation}\label{l3}
	\begin{aligned}
	\sup _{f \in \mathcal{F}} \sum_{k=1}^K f\left(\mathbf{Y}_k\right)=& \left[ \sup _{f \in \mathcal{F}} \sum_{k=1}^K f\left(\mathbf{Y}_k\right)-\mathbb{E}\left(\sup _{f \in \mathcal{F}} \sum_{k=1}^K f\left(\mathbf{Y}_k\right)\right)\right]\\&+\mathbb{E}\left(\sup _{f \in \mathcal{F}} \sum_{k=1}^K f\left(\mathbf{Y}_k\right)\right) .
	\end{aligned}
	\end{equation}
	Let $ f_{j}(\mathbf{Y})=\sup _{f \in \mathcal{F}} \left( \sum_{k\ne j} f\left(\mathbf{Y}_k\right)+f\left(\mathbf{Y}\right)\right)  $, since $ f $ is binary-valued, we have $\left| f_{j}(\mathbf{Y})-f_{j}(\mathbf{Y'})\right|\leq 1 ,$ for any $ j\in [K] $ and $ \mathbf{Y}, \mathbf{Y'}\in \mathbb{R}^{d\times m}$. By the bounded difference inequality (see \cite{boucheron2003concentration}),\[\mathbb{P}\left(\sup _{f \in \mathcal{F}} \sum_{k=1}^K f\left(\mathbf{Y}_k\right)-\mathbb{E}\left(\sup _{f \in \mathcal{F}} \sum_{k=1}^K f\left(\mathbf{Y}_k\right)\right) \geq t\right)  \leq \exp \left(-\frac{2 t^2}{K}\right).\] Therefore, by taking $ t=K/16 $, we can derive that, with probability at least $ 1-\exp (-K/128)$ , the first term in (\ref{l3}) is bounded above by $K/16$.
	
	For the second term, we further write
	$$
	\mathbb{E}\left(\sup _{f \in \mathcal{F}} \sum_{k=1}^K f\left(\mathbf{Y}_k\right)\right) \leq \mathbb{E}\left(\sup _{f \in \mathcal{F}} \sum_{k=1}^K f\left(\mathbf{Y}_k\right)-K \mathbb{E}\left(f\left(\mathbf{Y}_k\right)\right)\right)+\sup _{f \in \mathcal{F}} K \mathbb{E}\left(f\left(\mathbf{Y}_k\right)\right) .
	$$
	By Markov's inequality, for any $v \in \mathcal{B}_0^*(\mathcal{U})$,
	$$
	\mathbb{P}\left(\left|\left\langle\frac{1}{m} \sum_{i \in B_1} Y_i-\mu, v\right\rangle\right| \geq r_K\right) \leq \frac{\mathbb{E}\left(\sum_{i \in B_1}\left\langle Y_i-\mu, v\right\rangle^2\right)}{m^2 r_K^2} \leq \frac{1}{16}.
	$$Then, $\sup _{f \in \mathcal{F}} K \mathbb{E}\left(f\left(\mathbf{Y}_k\right)\right) \leq K/16$. And by Lemma \ref{lc}, there exists a universal constant $C^{\prime}$ such that
	$$
	\mathbb{E}\left(\sup _{f \in \mathcal{F}} \frac{1}{K}\sum_{k=1}^K f\left(\mathbf{Y}_k\right)- \mathbb{E}\left(f\left(\mathbf{Y}_k\right)\right)\right) \leq C^{\prime}  \sqrt{\frac{\mathrm{VC}(\mathcal{F})}{K}} .
	$$
	Hence, if $K \geq 256 C^{\prime 2} \mathrm{VC}(\mathcal{F})$,
	$$
	\mathbb{E}\left(\sup _{f \in \mathcal{F}} \sum_{k=1}^K f\left(\mathbf{Y}_k\right)-K \mathbb{E}\left(f\left(\mathbf{Y}_k\right)\right)\right) \leq \frac{K}{16} .
	$$
	Thus,  the second term in (\ref{l3}) is no more than $K/16+K/16=K/8$.  
	
	Back to the possibly contamination model, if $C \geq 16$, then $ K \geq 16|\mathcal{O}|$ and one can deduce that $$\sup _{f \in \mathcal{F}} \sum_{k=1}^K f\left(\mathbf{X}_k\right) \leq\sup _{f \in \mathcal{F}} \sum_{k=1}^K f\left(\mathbf{Y}_k\right)+ \frac{K }{16}.$$
	
	Putting everything together, we derive that, if $C \geq 256 C^{\prime 2}\vee 16$, the following event $\mathcal{E}$ has probability $\mathbb{P}(\mathcal{E}) \geq 1-\exp (-K / 128)$: for all $f \in \mathcal{F}$,
	$$
	\sum_{k=1}^K f\left(\mathbf{X}_k\right) \leq \frac{K }{16}+\frac{K }{8}+\frac{K }{16}=\frac{K }{4}.
	$$
	Hence, we have proved that, for any $x^* \in \mathcal{B}_0^*(\mathcal{U})$,
	$$
	\operatorname{Med}\left|x^*\left(\bar{X}_k-\mu\right)  \right| \leq r_K,
	$$
	whenever $ K\geq C(\mathrm{VC}(\mathcal{B}_0^*(\mathcal{U}))\vee|\mathcal{O}|).$ 	
	We conclude that taking $ \epsilon=r_{K} $, $\mathbb{S}(\epsilon)$ is nonempty as it contains $\mu$, at least on $\mathcal{E}$. By definition, $\widehat{\mu}_{K} \in \mathbb{S}(\epsilon)$ for this choice of $\epsilon$. Observe that for every $x^* \in \mathcal{B}_0^*(\mathcal{U})$ there is some index $j$ such that
	$$
	\left|x^*\left(\bar{X}_j\right)-x^*(\widehat{\mu}_{K})\right| \leq r_K \text { and }\left|x^*\left(\bar{X}_j\right)-x^*(\mu)\right| \leq r_K,
	$$
	because both conditions hold for more than half of the indices $j$. Thus,
	$$
	\left|x^*(\widehat{\mu}_{K})-x^*(\mu)\right| \leq\left|x^*\left(\bar{X}_j\right)-x^*(\widehat{\mu}_{K})\right|+\left|x^*\left(\bar{X}_j\right)-x^*(\mu)\right| \leq 2 r_K .
	$$
	
	Finally, recalling that $\|v\|=\sup _{x^* \in \mathcal{B}_0^*(\mathcal{U})} x^*(v)$, one has that
	$$
	\|\widehat{\mu}_{K}-\mu\|=\sup _{x^* \in \mathcal{B}_0^*(\mathcal{U})}\left|x^*(\widehat{\mu})-x^*(\mu)\right| \leq 2 r_K.
	$$
	Thus, for $ \widehat{\mu}_{K}\in \mathbb{S}(r_{K}) $, and $ K\geq C(\mathrm{VC}(\mathcal{B}_0^*(\mathcal{U}))\vee|\mathcal{O}|\vee 128\log (1 / \delta)), $ we obtain that with probability at least $ 1-\delta $,\[\|\widehat{\mu}_{K}-\mu\|\leq 8\sup _{v \in \mathcal{B}_0^*(\mathcal{U})} \mathbb{E}\left(\left\langle Y_1-\mu, v\right\rangle^2\right)^{1 / 2} \sqrt{\frac{K}{N}}.\]
\end{proof}

\subsection{Proof of Theorem \ref{c}}
The proof of Theorem \ref{c} is similar to that of Theorem \ref{m}, and we just give the first part in which we adjust the dual space and some coefficients.
\begin{proof}
	Let $$\mathcal{F}=\left\{\left(x_i\right)_{i \leq m} \rightarrow \mathbf{1}_{\left[\frac{1}{m} \sum_i x_i\otimes x_i -\Sigma, U\right] \geq r_K}, u \in \mathcal{B}_0^*\right\}, $$where
	$r_K=4 \sigma\sqrt{\frac{K}{N}}.$ The function $f \in \mathcal{F}$ are compositions of the function $x \rightarrow \frac{1}{m} \sum_i x_i\otimes x_i -\Sigma$ and the functions $x \rightarrow \mathbf{1}_{[ x, U] \geq r_K}$ for $U \in T$. The VC-dimension of the set of these compositions is smaller than the $\mathrm{VC}$-dimension of the set of indicator functions indexed by $T$. We just get $\mathrm{VC}(\mathcal{F})\leq\mathrm{VC}(T) \leq \mathrm{VC}(\mathcal{B}_0^*\otimes\mathcal{B}_0^*)$ , which is, by Lemma \ref{v}, bounded by $c_0 \mathrm{VC}(\mathcal{B}_0^*)$ for some constant $c_0$.
	
	By Markov's inequality, for any $u \in \mathcal{B}_0^*$, that is, $ U\in T $,
	$$
	\mathbb{P}\left(\left|\left[\frac{1}{m} \sum_i Y_i\otimes Y_i -\Sigma, U\right]\right| \geq r_K\right) \leq \frac{\mathbb{E}\left(\sum_{i \in B_1}\left[Y_i\otimes Y_i -\Sigma, U\right]^2\right)}{m^2 r_K^2} \leq \frac{1}{16} .
	$$	
	The rest of this proof is the same as that of the preceding theorem.
	There exists an absolute constant $C$ such that, if $K \geq C(d \vee|\mathcal{O}|)$, then, with probability larger than $1-\exp (-K / 128)$,
	$$
	\left\|\widehat{\Sigma}_\delta-\Sigma\right\| \leq 8 \sigma \sqrt{\frac{K}{N}} .
	$$
\end{proof}

\subsection{Proof of Theorem \ref{ttt}}

Before providing the proof of the theorem, for ease of manipulation, we first provide some lemmas without proof. The first lemma can be derived from the proof of Theorem \ref{m}, and the second is the Lemma 7.3 in \cite{chen2018robust}.

\begin{lemma}\label{l}
	For any $k \in [K]$, let $\mathbf{X}_k:=\left(X_i\right)_{i \in B_k}$ and $\mathbf{Y}_k:=\left(Y_i\right)_{i \in B_k}$. Let $\mathcal{F}$ be a Boolean class of functions satisfying the following two assumptions:
	\begin{itemize}
		\item For all $f \in \mathcal{F}, \mathbb{P}\left(f\left(\boldsymbol{Y}_1\right)=1\right) \leq \dfrac{1 }{4\alpha}$,
		\item $K \geq C(\mathrm{VC}(\mathcal{F}) \vee|\mathcal{O}|)$ where $C$ is a universal constant,
	\end{itemize}
	where $ \alpha>1 $ can be any constant. Then, with probability at least $1-\exp (-\frac{K}{8\alpha^{2}})$, for all $f \in \mathcal{F}$, there is at least $\dfrac{\alpha-1}{ \alpha} K $ blocks $B_k$ on which $f\left(\boldsymbol{X}_k\right)=0$.
	
\end{lemma}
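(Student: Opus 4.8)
The plan is to reproduce, with $\alpha$ carried through every estimate, the four-part budgeting argument from the proof of Theorem~\ref{m}; indeed the lemma is precisely that argument stated abstractly. Writing the conclusion in complementary form, it suffices to show that on an event of probability at least $1-\exp(-K/(8\alpha^{2}))$ one has $\sup_{f\in\mathcal{F}}\sum_{k=1}^{K}f(\mathbf{X}_k)\le K/\alpha$, since then for every $f$ at most $K/\alpha$ blocks satisfy $f(\mathbf{X}_k)=1$, leaving at least $\frac{\alpha-1}{\alpha}K$ blocks with $f(\mathbf{X}_k)=0$.

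First I would strip off the contamination. Each corrupted block contains at least one index of $\mathcal{O}$, so at most $|\mathcal{O}|$ of the $K$ blocks differ between $\mathbf{X}$ and $\mathbf{Y}$; since every $f$ is $\{0,1\}$-valued this gives $\sup_f\sum_k f(\mathbf{X}_k)\le \sup_f\sum_k f(\mathbf{Y}_k)+|\mathcal{O}|$, and choosing $C$ large enough that $K\ge 4\alpha|\mathcal{O}|$ bounds the defect by $K/(4\alpha)$. It then remains to control the clean quantity $\sup_f\sum_k f(\mathbf{Y}_k)$ via the split \eqref{l3} into a deviation term and its expectation.

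For the deviation term I would invoke the bounded difference inequality exactly as in Theorem~\ref{m}: the map $\mathbf{Y}\mapsto\sup_f\sum_k f(\mathbf{Y}_k)$ changes by at most $1$ when a single block is altered, so the deviation exceeds $t$ with probability at most $\exp(-2t^{2}/K)$; taking $t=K/(4\alpha)$ produces precisely the exponent $-K/(8\alpha^{2})$ that appears in the statement. For the expectation I would use the same secondary split, bounding the mean part $\sup_f K\,\mathbb{E}f(\mathbf{Y}_1)$ by $K/(4\alpha)$ directly from the hypothesis $\mathbb{P}(f(\mathbf{Y}_1)=1)\le 1/(4\alpha)$, and the centered empirical-process part by $C'\sqrt{K\,\mathrm{VC}(\mathcal{F})}$ via Lemma~\ref{lc}. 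The four contributions --- contamination, deviation, mean, and complexity --- are each of size $K/(4\alpha)$ and sum to $K/\alpha$, which yields the claim.

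The step that needs honest accounting is the complexity term: forcing $C'\sqrt{K\,\mathrm{VC}(\mathcal{F})}\le K/(4\alpha)$ requires $K\ge 16C'^{2}\alpha^{2}\,\mathrm{VC}(\mathcal{F})$, and the contamination step requires $K\ge 4\alpha|\mathcal{O}|$, so the constant $C$ in the hypothesis cannot be literally independent of $\alpha$ --- it must be allowed to grow like $\alpha^{2}$. This is the main (and essentially only) obstacle to the bare statement, and it is harmless in use: $\alpha$ is a fixed design parameter (for Tukey's median one takes $\alpha=2d$, which recovers the exponent $-K/(32d^{2})$), so I would either write $C=C(\alpha)$ or absorb the $\alpha^{2}$ factor into the VC budget, making the dependence explicit.
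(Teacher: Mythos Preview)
Your proposal is correct and is precisely the approach the paper intends: the lemma is stated there explicitly ``without proof'' as an abstraction of the four-term budgeting argument in the proof of Theorem~\ref{m}, and you have carried out exactly that abstraction with $\alpha$ threaded through each piece. Your remark that the universal constant $C$ must in fact scale like $\alpha^{2}$ (from the complexity term) and like $\alpha$ (from the contamination term) is a legitimate observation about the lemma as stated; the paper's own proof of Theorem~\ref{m} corresponds to the case $\alpha=4$, where this dependence is absorbed into the numerical constant $256C'^{2}\vee 16$, and in the Tukey application $\alpha=2d$ is fixed, so the issue is indeed harmless in use.
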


Define a subset $H_{u, \eta}$ of $\mathbb{R}^d$ as $H_{u, \eta}=\left\{y:u^T y\leq u^T \eta\right\}$. We need the following concentration inequality for suprema of the empirical process indexed by these subsets $H_{u, \eta}$, where $u \in \mathcal{B}_0^*$ and $\eta \in \mathbb{R}^d$.

\begin{lemma}\label{7.3}
	For i.i.d. real-valued data $X_1, \ldots, X_n$ from distribution $\mathbb{P}$, and sufficiently large $n$, we have, with probability at least $1-\delta$,
	
	$$
	\sup _{u \in \mathcal{B}_0^*, \eta \in \mathbb{R}^d}\left|\mathbb{P}\left(H_{u, \eta}\right)-\mathbb{P}_n\left(H_{u, \eta}\right)\right| \leq \sqrt{\frac{1440 e \pi}{1-e^{-1}}} \sqrt{\frac{\mathrm{VC}\left(\mathcal{B}_0^*\right) }{n}}+\sqrt{\frac{\log (1 / \delta)}{2 n}},
	$$
	where $\mathbb{P}_n$ denotes the empirical distribution of $\left\{X_i\right\}_{i=1}^n$.
\end{lemma}

\begin{proof}[Proof of Theorem \ref{ttt}]
	Let $K \geq C(\mathrm{VC}(\mathcal{F}) \vee|\mathcal{O}|)$ with $C$ the universal constant from Lemma  \ref{l}, and let  $\mathcal{F}=\left\{\left(x_i\right)_{i \leq m} \rightarrow \mathbf{1}_{\left\langle\frac{1}{m} \sum_i x_i -\mu, v\right\rangle \geq r_{K,d}}, v \in \mathcal{B}_0^*\right\}$, $$
	r_{K,d}=8\sqrt{8} \sup _{v \in \mathcal{B}_0^*} \mathbb{E}\left(\left\langle Y_1-\mu, v\right\rangle^2\right)^{1 / 2} \sqrt{\frac{K}{N}}.
	$$ Based on the analysis in the previous Section \ref{3}, $\mathrm{VC}(\mathcal{F}) \leq c_0 \mathrm{VC}(\mathcal{B}_0^*)$ for some constant $c_0$.
	By Markov's inequality, for any $v \in \mathcal{B}_0^*$,
	$$
	\mathbb{P}\left(\left|\left\langle\frac{1}{m} \sum_{i \in B_1} Y_i-\mu, v\right\rangle\right| \geq r_{K,d}\right) \leq \frac{\mathbb{E}\left(\sum_{i \in B_1}\left\langle Y_i-\mu, v\right\rangle^2\right)}{m^2 r_{K,d}^2} \leq \frac{1}{32} .
	$$
	
	By Lemma \ref{l}, applied with $\mathcal{F}$ and $ \alpha=8 $, we derive that, with probability $\geq 1-\exp (-\frac{K}{2048})$, there is at least $\dfrac{7}{8} K $ blocks $B_k$ on which the following event happens:
	$$
	\sup _{v \in \mathcal{B}_0^*} \left|\left\langle\frac{1}{m} \sum_{i \in B_k} X_i-\mu, v\right\rangle\right| \leq r_{K,d}.
	$$
	We claim that, for any $ v \in \mathcal{B}_0^* $,
	$$
	\langle\mu-\hat{\mu}, v\rangle \leq  r_{K,d}.
	$$
	That is because, for any $a \in \mathbb{R}^d$ attains the deepest level, if there exists $v^* \in \mathcal{B}_0^*$ such that $\left\langle\mu-a, v^*\right\rangle> r_{K,d}$, then, on the above event,\begin{align*}
	\left\langle\bar{X}_{k}-a, v^*\right\rangle&=\left\langle\bar{X}_k-\mu, v^*\right\rangle+\left\langle\mu-a, v^*\right\rangle\\
	&>\left\langle\bar{X}_k-\mu, v^*\right\rangle+r_{K,d}\\
	&\geq 0
	\end{align*}
	holds for at least $\dfrac{7}{8} K $ blocks $B_k$, which means that, \[\sup_{v \in \mathcal{B}_0^*}\sum_{k=1}^{K}\mathbf{1}_{\left\langle \bar{X}_k-a,v\right\rangle > 0}\geq \sum_{k=1}^{K}\mathbf{1}_{\left\langle \bar{X}_k-a,v^*\right\rangle > 0}\geq  7K /8.\]
	
	We decompose the data $\left\{\bar{X}_k\right\}_{k=1}^K=\left\{\bar{X}_k\right\}_{k\in\mathcal{I}} \cup\left\{\bar{X}_k\right\}_{k\in\mathcal{J}}$, such that samples are clean for $ k\in\mathcal{I} $, and are corrupted for $ k\in\mathcal{J} $, then $ \dfrac{\left|\mathcal{I} \right| }{K}\geq 1-\varepsilon $. Using lemma \ref{7.3} for $ \left\{\bar{X}_k\right\}_{k\in\mathcal{I}} $ and $ \tilde{\mathbb{P}} $, where $ \tilde{\mathbb{P}}$ is the distribution of $ \bar{X}_k $, we have with probability at least $1-\delta$,
	
	$$
	\sup _\eta\left|\mathcal{D}\left(\eta, \tilde{\mathbb{P}}\right)-\mathcal{D}\left(\eta,\left\{\bar{X}_k\right\}_{k\in\mathcal{I}}\right)\right| \leq \sqrt{\frac{1440 e \pi}{1-e^{-1}}} \sqrt{\frac{d+1}{\left| \mathcal{I} \right| }}+\sqrt{\frac{\log (1 / \delta)}{2 \left| \mathcal{I} \right| }}.
	$$

	We lower bound $\mathcal{D}\left(\hat{\mu},\left\{\bar{X}_k\right\}_{k=1}^K\right)$ by
	$$
	\begin{aligned}
	\mathcal{D}\left(\hat{\mu},\left\{\bar{X}_k\right\}_{k=1}^K\right)
	& \geq  \mathcal{D}\left(\mu,\left\{\bar{X}_k\right\}_{k=1}^K\right)\\
	& \geq \frac{\left| \mathcal{I} \right| }{K}\mathcal{D}\left(\mu,\left\{\bar{X}_k\right\}_{k\in\mathcal{I}}\right)\\
	& \geq \frac{\left| \mathcal{I} \right|}{K}\left( \mathcal{D}\left(\mu, \tilde{\mathbb{P}}\right)- \sqrt{\frac{1440 e \pi}{1-e^{-1}}} \sqrt{\frac{d+1}{\left| \mathcal{I} \right| }}-\sqrt{\frac{ \log (1 / \delta)}{2\left| \mathcal{I} \right| }} \right) .
	\end{aligned}
	$$
	The second inequality is due to the property of depth function that
	
	$$ K\mathcal{D}\left(\eta,\left\{\bar{X}_k\right\}_{k=1}^K\right) \geq \left| \mathcal{I} \right| \mathcal{D}\left(\eta,\left\{\bar{X}_k\right\}_{k\in\mathcal{I}}\right),
	$$
	for any $\eta \in \mathbb{R}^d$. Since $ \tilde{\mathbb{P}}$ represents the distribution of $m $ independent and identically distributed random vectors drawn from the distribution $\mathbb{P}$, the Central Limit Theory implies that $\tilde{\mathbb{P}}$  approximates a normal distribution, which is symmetric. Therefore, when $ m $ is sufficiently large, and $ K\geq C(\mathrm{VC}(\mathcal{F}) \vee|\mathcal{O}|) $, we have
	$$\mathcal{D}\left(\hat{\mu},\left\{\bar{X}_k\right\}_{k=1}^K\right)\geq\dfrac{(1-\varepsilon)}{2}\mathcal{D}\left(\mu, \tilde{\mathbb{P}}\right)\geq 1/8,$$
	for sufficiently large $ m $ and $ K $, which means that $$ \sup_{v \in \mathcal{B}_0^*}\sum_{k=1}^{K}\mathbf{1}_{\left\langle \bar{X}_k-\hat{\mu},v\right\rangle > 0}< 7K/8  .$$
	
	Therefore, $a \neq \hat{\mu}$. Hence, we have proved the claim, $\langle\mu-\hat{\mu}, v\rangle \leq  r_{K,d}. $ Take the supremum over \(v \in \mathcal{B}_0^*\) and since $\sup _{v \in \mathcal{B}_0^*}\left\langle\mu-\hat{\mu}, v\right\rangle=\|\mu-\hat{\mu}\|$, we conclude the proof.

\end{proof}

\bibliography{momref}

\end{document}